\title{Sharp regularity estimates for a singular inhomogeneous $(m, p)$-Laplacian equation}
\author{\it by \smallskip \\
P\^{e}dra D. S. Andrade,\footnote{\noindent \textsc{P\^{e}dra Daricl\'{e}a Santos Andrade}.
 Instituto Superior T\'{e}cnico, Universidade de Lisboa, 1049-001, Lisboa, Portugal.
\texttt{E-mail address: pedra.andrade@tecnico.ulisboa.pt}
}
\smallskip \qquad Jo\~{a}o Vitor da Silva,\footnote{\noindent \textsc{Jo\~ao Vitor da Silva}.
Universidade Estadual de Campinas - UNICAMP. Department of Mathematics. Campinas - SP, Brazil.
\texttt{E-mail address: jdasilva@unicamp.br}
}
\smallskip \qquad Giane C. Rampasso\footnote{\noindent \textsc{Giane Casari Rampasso}.
Universidade Federal de Itajubá - UNIFEI. Instituto de Matemática e Computação, Campus Prof. José Rodrigues Seabra. Itajubá - MG, Brazil.
\texttt{E-mail address: gianecr@unifei.edu.br}
}
\smallskip \\
\quad $\&$ \quad
\smallskip \\
Makson S. Santos\footnote{\noindent \textsc{Makson Sales Santos}.
 Instituto Superior T\'{e}cnico, Universidade de Lisboa, 1049-001, Lisboa, Portugal.
\texttt{E-mail address: makson.santos@tecnico.ulisboa.pt}
}}
\newlength{\hchng}
\newlength{\vchng}
\def \Div {\mathrm{div}}
\def \dist {\mathrm{dist}}
\def \loc {\mathrm{loc}}
\newcommand{\defeq}{\mathrel{\mathop:}=}
\newtheorem{theorem}{Theorem}[section]
\newtheorem{lemma}[theorem]{Lemma}
\newtheorem{proposition}[theorem]{Proposition}
\theoremstyle{definition}
\newtheorem{definition}[theorem]{Definition}
\theoremstyle{remark}
\newtheorem{remark}[theorem]{Remark}
\newtheorem{Assumption}{A}
\numberwithin{equation}{section}
\newcommand{\intav}[1]{\mathchoice {\mathop{\vrule width 6pt height 3 pt depth  -2.5pt
\kern -8pt \intop}\nolimits_{\kern -6pt#1}} {\mathop{\vrule width
5pt height 3  pt depth -2.6pt \kern -6pt \intop}\nolimits_{#1}}
{\mathop{\vrule width 5pt height 3 pt depth -2.6pt \kern -6pt
\intop}\nolimits_{#1}} {\mathop{\vrule width 5pt height 3 pt depth
-2.6pt \kern -6pt \intop}\nolimits_{#1}}}
\begin{document}
\maketitle

\begin{abstract}

In this paper, we investigate a class of doubly nonlinear evolutions PDEs. We establish sharp regularity for the solutions in H\"{o}lder spaces. The proof is based on the geometric tangential method and intrinsic scaling technique. Our findings extend and recover the results in the context of the classical evolution PDEs with singular signature via a unified treatment in the slow, normal and fast diffusion regimes. In addition, we provide some applications to certain nonlinear evolution models, which may have their own mathematical interest.

\bigskip

\textbf{Keywords:} Sharp H\"{o}lder regularity, doubly nonlinear PDEs, intrinsic scaling techniques, geometric tangential methods.

\bigskip

\textbf{MSC 2020:} 35B65, 35K55, 35K67, 35K92.

\end{abstract}



\section{Introduction}

We examine the regularity for bounded weak solutions to doubly nonlinear evolution PDEs of singular type, namely the inhomogeneous
$(m,p)$-Laplacian equation:
\begin{equation}\label{1.1}
\mathcal{Q}_{m, p} [u]:= \frac{\partial u}{\partial t}-\Div(m|u|^{m-1}|\nabla u|^{p-2}\nabla u) = f(x, t) \quad \text{in} \quad Q^-_1,
\end{equation}
where $m\ge 1$, $\max\left\{1, \frac{2n}{n+2}\right\}< p \leq  2$, $f$ belongs to an appropriate Lebesgue space with mixed norms (cf. \cite{BP}), and $Q^-_1$ is an intrinsic cylinder to be defined later. We produce new and sharp estimates for the weak solutions to \eqref{1.1} in H\"older spaces. It is convenient to notice that for such range of $m$ and $p$ our model generalizes both the well-known porous medium model, when $p=2$, and the evolution $p$-Laplacian operator, when $m=1$. Particularly, in the case $p=2$ and $m=1$ we recover the classical Heat equation. For further details, we refer the reader to \cite{AMU20, daST17, Diehl21, K08, TU14}.

The $(m,p)$-Laplacian operator appears in many physics phenomena, ranging from the study of turbulent filtration of a gas, fluids through porous media, in theoretical glaciology, plasma physics, image processing, and groundwater problems to the motion of viscous fluids, see \cite{AMS04, Iva97, Iva00} and \cite{Kalash87}.  For instance, in the setting $n = 1$, $p \in [3/2, 2)$ and $m > p - 1$, equation \eqref{1.1} arises in the study of a turbulent flow of a gas in a one-dimensional porous media.  Moreover, in the three-dimensional case, similar equations (to not say more general) of the form
\[
u_t - \mbox{div}\{|u|^{m}|\nabla u - c_0|u|^ke_n|^{p-2}(\nabla u - c_0|u|^ke_n)\} = 0,
\]
for suitable exponents $m, p, k$, and $e_n := (0,0,1)$ appear when studying porous medium in turbulent regimes. We refer the reader to \cite{AMS04, DT94, Iva97, Iva00, Leibenson45} and the references therein.

Besides the applications mentioned above, the $(m,p)$-Laplacian operator drew the attention of many authors due to others properties such as its nonlinear structure, and the switch of the speed of propagation that depends on the quantity $m+p$. To illustrate the later phenomenon, let us consider a given solution $u \in C^2(Q^-_1)$ to \eqref{1.1}. We may rewrite the equation \eqref{1.1} as follows:
\begin{equation*}
  \frac{\partial u}{\partial t}-\mathrm{E}(u,\nabla u )\Delta u  - \mathrm{F}(u, \nabla u)- \mathrm{G}(u, \nabla u)\Delta^N_{\infty}\,u = f(x, t) \quad \text{in} \quad Q^-_1,
\end{equation*}
where
\[
\left\{
\begin{array}{rcl}
  \mathrm{E}(u,\nabla u ) & \defeq & m |u|^{m-1}|\nabla u|^{p-2} \\
  \mathrm{F}(u, \nabla u) & \defeq & m (m-1)|u|^{m-3}u| \nabla u|^{p} \\
  \mathrm{G}(u, \nabla u) & \defeq & m(p-2)|u|^{m-1}|\nabla u|^{p-2},
\end{array}
\right.
\]
and
\[
\displaystyle \Delta_{\infty}\,u \defeq \frac{1}{|\nabla u|^2}\sum_{i, j=1}^{n} \frac{\partial^2 u}{\partial x_i \partial x_i}\frac{\partial u}{\partial x_i}\frac{\partial u}{\partial x_j} = \left\langle D^2 u \frac{\nabla u}{|\nabla u|}, \frac{\nabla u}{|\nabla u|} \right\rangle,
\]
is the normalized $\infty$-Laplacian operator. It is easy to see that
\begin{equation}\label{Diffusion_Coef}
 \lim_{\varrho \rightarrow 0} \mathrm{E}(\varrho u, \varrho \nabla u )=
 \left\{
 \begin{array}{ccl}
 0 & \quad m+p > 3 \vspace{0.1cm}  \\
 E(u,\nabla u ) & \quad m+p = 3 \vspace{0.1cm} \\
 + \infty  & \quad  m+ p <3 \\
 \end{array}
 \right.
\end{equation}
Hence, in view of \eqref{Diffusion_Coef}, we can classify the diffusion regime of the $(m, p)$-Laplacian operator as follows:

\begin{table}[h]
\centering
 \begin{tabular}{c|c|c}
{\it PDEs of slow diffusion } & {\it PDEs of normal diffusion }  & {\it PDEs of fast diffusion } \\
\hline
 $m+p>3$ & $m+p=3$ & $m+p<3$  \\
\end{tabular}
\end{table}
\noindent Note that the degenerate/singular feature of the equation \eqref{1.1} is given by the nonlinear term
\[
\mathrm{E}(u,\nabla u )  \defeq  m| u|^{m-1}| \nabla u |^{p-2},
\]
hence, we are not able to use directly the standard regularity results available for classical models, as in \cite{K08}.

This class of equations has been extensively studied over the years. A broad range of development, including existence and uniqueness of weak solutions were established in \cite{IvaJag00, Stur17, Stur17-2}, under a suitable integrability conditions on the right-hand side (see, for instance \eqref{w-cc} below). The boundedness property for the weak solutions of \eqref{1.1} is the subject of \cite{Iva00} and \cite{Stur18}. For the $C^{0,\alpha}$-regularity (for some $\alpha \in (0,1)$) and the local behaviour of the weak solutions, we refer the reader to \cite{BS99, FSV14, Iva89, Iva91, Iva97, Iva95, Iva98, Iva00, PV93, Surn14} and \cite{Vesp22}.

Concerning the optimal regularity for the $(m,p)$-Laplacian equation, we stress the works of Ara\'{u}jo in \cite{Ara20} and Bezerra J\'{u}nior \textit{et al}. in \cite{BJdaSR22}, both of them studied this problem under the assumption $m >1$ and $p>2$, { \it i.e.}, $m+p-2\geq1$. More precisely, in \cite{Ara20} the author proved that any bounded weak solution of \eqref{1.1} belongs to $C^{\alpha, \frac{\alpha}{\theta}}_{\text{loc}}(Q_1^-)$, with
\begin{equation}\label{AraujoExponent}
  \alpha \defeq \min\left\{\frac{\alpha^{-}_0(p-1)}{m+p-2},\frac{(pq-n)r-pq}{q[(r-1)(m+p-2)+1]}\right \}.
\end{equation}
The notations $\alpha_0 \in (0, 1]$ denotes the optimal H\"{o}lder exponent to the homogeneous case and   $\alpha^{-}_0$ stands for any number $s$ such that $0< s <\alpha_0$. Moreover
\begin{equation}\label{Intr_Scalling}
  \theta \defeq p -\alpha(m+p-2)\Big(1-\frac{1}{m+p-2}\Big),
\end{equation}
 under the compatibility assumptions
\[
\frac{1}{r}+\frac{n}{pq}<1 \quad \text{and} \quad \frac{3}{r}+\frac{n}{q}>2.
\]
Furthermore, the authors in \cite{BJdaSR22} studied the case $m\geq1$ and $p\geq2$, under a slightly different condition
\[
\frac{1}{r}+\frac{n}{pq}<1 < \frac{2}{r}+\frac{n}{q},
\]
which implies
$$
\frac{1}{r}+\frac{n}{pq}<1 \quad \text{and} \quad  \frac{3}{r}+m\left(1-\frac{1}{r}\right)+\frac{n}{q}>2 \quad (\text{for} \quad q, r>1).
$$
In \cite{BJdaSR22}, they showed that bounded weak solutions to \eqref{1.1} are locally of class $C^{\alpha, \frac{\alpha}{\theta}}$, where $\alpha$ is the sharp exponent
\[
\alpha \defeq \min\left\{\max\left\{\frac{p\alpha^{-}_0}{\bar{p}+\alpha^{-}_0(m+p-3)}, \frac{2\alpha^{-}_0(p-1)}{\bar{p}(m+p-2)}\right\}, \frac{(pq-n)r-pq}{q[(r-1)(m+p-2)+1]}\right\},
\]
and
\begin{equation}\label{p(m)}
\bar{p} \defeq
\left\{
\begin{array}{rcl}
  2 & \mbox{if} & m=1 \\
  p & \mbox{if} & m>1.
\end{array}
\right.
\end{equation}

In sequel, we summarize the sharp H\"older regularity results obtained in the literature for the problems related to \eqref{1.1} in the table below:

\begin{table}[h]
\centering
\resizebox{\textwidth}{!}{
 \begin{tabular}{c|c|c|c}
{\it Model PDE } & {\it Compatibility condition }  & {\it H\"{o}lder regularity exponent} & \textit{Reference} \\
\hline
 $\mathcal{Q}_{m, p} [u]=f$ & $\frac{1}{r}+\frac{n}{pq}<1<\frac{2}{r}+\frac{n}{q}$ & $\alpha=\min\left\{\max\left\{\frac{p\alpha^{-}_0}{\bar{p}+\alpha^{-}_0(m+p-3)}, \frac{2\alpha^{-}_0(p-1)}{\bar{p}(m+p-2)}\right\}, \frac{(pq-n)r-pq}{q[(r-1)(m+p-2)+1]}\right\}$ &  \cite{BJdaSR22} \\
 \hline
 $\mathcal{Q}_{m, p} [u]=f $ & $\frac{1}{r}+\frac{n}{pq}<1 \,\, \text{and} \,\, \frac{3}{r}+\frac{n}{q}>2$ & $\alpha=\min\Big\{\frac{\alpha^{-}_0(p-1)}{m+p-2},\frac{(pq-n)r-pq}{q[(r-1)(m+p-2)+1]}\Big\}$ & \cite{Ara20} \\
\hline
$\frac{\partial u}{\partial t}-\Delta_{p} u=f $ & $\frac{1}{r}+\frac{n}{pq}<1< \frac{n}{q} + \frac{2}{r} $ & $\alpha=\frac{(pq-n)r-pq}{q[(p-1)r-(p-2)]}$ & \cite{TU14} \\
\hline
$\frac{\partial u}{\partial t}-\Delta u^{m}=f$ & $\frac{1}{r}+\frac{n}{2q} <1$ & $\alpha=\min\Big\{\frac{2\alpha^{-}_0}{2+(m-1)\alpha_0},\frac{(2q-n)r-2q}{q[mr-(m-1)]}\Big\}$ & \cite{Diehl21} \\
\hline 
$\frac{\partial u}{\partial t}-\Delta u^{m}=f$&$\frac{1}{r}+\frac{n}{2q}<1$&$\alpha=\min\Big\{\frac{\alpha^{-}_0}{m},\frac{[(2q-n)r-2q]}{q[mr-(m-1)]}\Big\}$& \cite{AMU20}\\
\hline
$\frac{\partial u}{\partial t}-\Delta u=f$ & $1< \frac{n}{q} + \frac{2}{r}< 2$ & $\alpha=2-\big(\frac{2}{r}+\frac{n}{q}\big)$& \cite{daST17},\cite{K08} \\
\end{tabular}}
\end{table}

To the best of our knowledge, there is no reference in the literature regarding the optimal regularity for the solutions to \eqref{1.1}, in the singular case $\max\left\{1, \frac{2n}{n+2}\right\}<p< 2$, even for $m+p-2 \geq 1$. Moreover, there is no mention of regularity theory for the {\it fast diffusion} case $m+p-1 < 2$. In this paper, we deal with these scenarios. More precisely, arguing by intrinsic scaling and approximation methods (see for instance \cite{AdaSRT, Ara20, JVSilva19, DOS18, TU14, TU21} and \cite{U08}), we derive the optimal regularity estimates that depend only on the structural and universal parameters of the problem, for the solution of the singular case $\max\left\{1, \frac{2n}{n+2}\right\}<p< 2$, in both regimes $m+p-1\leq 2$ and $m+p-1 > 2$. Our findings extends the  sharp regularity results of Ara\'{u}jo et al. \cite[Theorem 6]{AMU20}, Ara\'{u}jo \cite[Theorem 1.1]{Ara20}, Diehl \cite[Theorem 2.5]{Diehl21} and Teixeira-Urbano \cite[Theorem 3.4]{TU14}.

The remainder of this paper is organized as follows: In Section 2 we present
our main results; we also gather a few facts used throughout the paper and
detail our main assumptions. In Section 3 we state some auxiliary results used throughout the paper. Section 4 is devoted to geometric iterations that leads to the proof the desired H\"older estimates in Section 5. Finally, in the Section 6 and Section 7 we put forward some applications of our main result.

\section{Assumptions and main result}\label{Assump}

In this section, we detail the assumptions and the main result of this manuscript. We start with the assumption on the source term $f$. We define the {\it parabolic cylinders} by
\[
  Q^{-}_{\rho}(x_0, t_0)  = B_{\rho}(x_0) \times \left(t_0-\rho^{\theta}, t_0\right],
\]
where $\theta$ is defined in \eqref{Int-Scal} below.

\begin{Assumption}[{\bf Regularity of the source term}]\label{assump_f}
We will assume that $f \in L^{q, r}(Q^{-}_1) = L^r(-1, 0; L^q(B_1))$, {\it i.e.}, a Banach space endowed with the mixed norm:
\[
  \|f\|_{L^{q, r}(Q^{-}_1)} \defeq  \left(\int_{-1}^{0} \|f(\cdot, t)\|^r_{L^q(B_1)} dt\right)^{\frac{1}{r}} = \left(\int_{-1}^{0} \left(\int_{B_1}|f(x, t)|^qdx\right)^{\frac{r}{q}}dt\right)^{\frac{1}{r}}.
\]
\end{Assumption}

Next, we introduce the compatibility condition for the exponents $q$ and $r$.

\begin{Assumption}[{\bf Weaker compatibility condition}]\label{assump_w-cc} We suppose that the following  condition is satisfied:
\begin{equation}\label{w-cc}
\frac{1}{r}+\frac{n}{pq}<1 < \frac{2}{r}+\frac{n}{q}.
\end{equation}
\end{Assumption}

Because of the range of the parameters, the assumption $A$\ref{assump_w-cc} implies the stronger compatibility conditions
\begin{equation}\label{cc}
\frac{1}{r}+\frac{n}{pq}<1 \quad \mbox{and} \quad  \frac{3}{r}+m\left(1-\frac{1}{r}\right)+\frac{n}{q}>2 \quad (\mbox{for} \quad q, r>1).
\end{equation}

Also, it is worth noticing that assumption $A$\ref{assump_w-cc} provides the minimum requirements to the existence of bounded weak solutions to \eqref{1.1} that enjoy a H\"{o}lder modulus of continuity, and allow the use of the Caccioppoli's estimates. See \cite[Ch.2, \S 1]{D93} and \cite[Proposition 3.1]{FS08}.

As we mentioned before, we are particularly interested in obtaining the optimal  H\"older regularity estimates to \eqref{1.1} under the regimes:
\[
m+p-2\geq 1 \quad \mbox{and} \quad  m+p-2< 1.
\]

\noindent In fact, by using the assumptions above, we prove H\"older estimates with the optimal exponent
{\small{
\begin{equation}\label{Sharp_alpha}{\tag{Sharp}}
\alpha \defeq \min\left\{\alpha^{-}_0\max\left\{\frac{1}{\alpha^{-}_0(m+p-3)+1}, 1\right\}, \frac{(pq-n)r-pq}{q[(r-1)(m+p-2)+1]}\right\}.
\end{equation}}}
\noindent Now, we set the \textit{intrinsic scaling factor} $\theta$ as
\begin{equation}\label{Int-Scal}
    \theta \defeq  p-\alpha(m+p-3) = p -\alpha(m+p-2)\Big(1-\frac{1}{m+p-2}\Big).
\end{equation}

As in \cite{Ara20} and \cite{BJdaSR22}, we also assume an upper bound for the optimal exponent $\alpha$. This is the content of the next assumption.

\begin{Assumption}[Upper bound for $\alpha$]\label{assump_alpha} We assume that
\[
\alpha \leq \frac{p-1}{p+m-2}.
\]
\end{Assumption}

We notice that, if $m+p-2\geq 1$, then	
\[
1+\frac{p-1}{p+m-2}\le \theta \le p \quad \text{for} \quad \max\left\{1, \frac{2n}{n+2}\right\}< p < 2\quad \text{and} \quad m\geq1.
\]
On the other hand, if $m+p-2\leq1$, we have
\[
p\leq\theta\leq 1+\frac{p-1}{p+m-2} \quad \text{for} \quad \max\left\{1, \frac{2n}{n+2}\right\}<p<2 \quad \text{and} \quad m \approx 1.
\]

Under these conditions, our main result reads as follows:

\begin{theorem}\label{t1.2}
Let $u \in C_{\loc}(-1, 0; L^2_{\loc}(B_1))$ be a bounded weak solution of \eqref{1.1} in $Q^{-}_1$. Suppose that A\ref{assump_f}-A\ref{assump_alpha} hold true. Then $u \in C_{\loc}^{\alpha, \frac{\alpha}{\theta}}(Q^{-}_1)$, {\it i.e.}, there exists a universal constant $\mathrm{M}_0>0$ such that
\[
  \displaystyle  [u]_{C^{\alpha, \frac{\alpha}{\theta}}\left(Q^{-}_{\frac{1}{2}}\right)} \leq \mathrm{M}_0\left[\|u\|_{L^{\infty}(Q^{-}_1)} + \|f\|_{L^{q, r}(Q^{-}_1)}\right],
\]
where $\alpha \in (0, 1)$ is as in \eqref{Sharp_alpha}, $\theta$ is given by \eqref{Int-Scal}, and
\[
   [u]_{C^{\alpha, \frac{\alpha}{\theta}}\left(Q^{-}_{\frac{1}{2}}\right)} \defeq \sup_{0< \rho \leq \rho_0} \left(\inf_{(x_0, t_0) \in Q^{-}_{\rho_0}} \frac{\|u - u(x_0, t_0)\|_{L^{\infty}\left(Q^{-}_{\rho_0}(x_0, t_0) \cap Q^{-}_{\frac{1}{2}}\right)}}{\rho_0^{\alpha}}\right).
\]
\end{theorem}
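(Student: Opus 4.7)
The plan is to run the geometric tangential method with intrinsic scaling, as pioneered by Teixeira and Urbano in \cite{TU14} and adapted to the $(m,p)$-setting by \cite{Ara20, BJdaSR22}, now extending it to the singular range $p<2$ and unifying both regimes $m+p\gtrless 3$. By a routine normalization (rescaling $u$, $x$ and $t$ by suitable universal factors), I would reduce to $\|u\|_{L^\infty(Q_1^-)}\le 1$ and $\|f\|_{L^{q,r}(Q_1^-)}\le\varepsilon$ for an arbitrarily prescribed $\varepsilon>0$, absorbing the resulting constant into $\mathrm{M}_0$. The intrinsic exponent $\theta$ in \eqref{Int-Scal} is precisely the one making the parabolic dilation $(x,t,u)\mapsto(\lambda x,\lambda^\theta t,\lambda^\alpha u)$ preserve the form of \eqref{1.1}; a direct computation then shows that under this dilation the source term gets a multiplicative factor $\lambda^{\kappa}$, where
\[
\kappa:=p-\alpha(m+p-2)-\frac{n}{q}-\frac{\theta}{r},
\]
and the nonnegativity of $\kappa$ is equivalent to the second entry of the minimum in \eqref{Sharp_alpha}, itself guaranteed by the weaker compatibility A\ref{assump_w-cc}.

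The heart of the proof is a compactness/approximation lemma: for every $\delta>0$ there exists $\varepsilon=\varepsilon(\delta)>0$ such that if $\|f\|_{L^{q,r}(Q_1^-)}\le\varepsilon$, then on a slightly smaller intrinsic cylinder one can find a function $h$ solving a \emph{homogeneous} limit PDE with $\|u-h\|_{L^\infty}\le\delta$. I would prove it by contradiction: a putative sequence $u_j$ with vanishing sources satisfies the Caccioppoli estimate (cf. \cite[Prop.\,3.1]{FS08} and \cite[Ch.\,2]{D93}) uniformly and admits a subsequential locally uniform limit $h$. Two subcases appear: if $u(x_0,t_0)$ is bounded away from zero, the coefficient $m|u|^{m-1}$ is uniformly elliptic near the base point and $h$ solves a parabolic $p$-Laplacian; if $u(x_0,t_0)\to 0$, $h$ solves the homogeneous $(m,p)$-Laplacian. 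In either case the available H\"older theory (see \cite{BS99, FSV14, PV93, Vesp22}) delivers a universal tangent exponent $\alpha_0$ and the pointwise decay $\sup_{Q_{\rho_\star}^-}|h-h(0,0)|\le C\rho_\star^{\alpha_0}$.

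Combining the two previous steps, a one-step flatness improvement follows: for every admissible $\alpha$ one fixes $\rho_\star\in(0,1/2)$ with $C\rho_\star^{\alpha_0}\le\tfrac12\rho_\star^{\alpha}$ and then chooses $\delta\le\tfrac12\rho_\star^{\alpha}$, producing a constant $\xi_1$ with $|\xi_1|\le C$ and $\sup_{Q_{\rho_\star}^-}|u-\xi_1|\le\rho_\star^{\alpha}$. The $\max$ appearing in the first entry of \eqref{Sharp_alpha} is exactly the cost of reconciling the two candidate tangent profiles in the common intrinsic metric $Q_\rho^-=B_\rho\times(-\rho^\theta,0]$: depending on whether $m+p\gtrless 3$, converting the tangent's native parabolic metric to this one yields the factor $\max\{1,\,1/(1+\alpha_0^-(m+p-3))\}$. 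I would iterate via
\[
u_k(x,t):=\rho_\star^{-k\alpha}\bigl(u(\rho_\star^k x,\rho_\star^{k\theta} t)-\xi_k\bigr),
\]
each $u_k$ solving an $(m,p)$-Laplacian equation on $Q_1^-$ with source of $L^{q,r}$-norm at most $\rho_\star^{k\kappa}\|f\|_{L^{q,r}}\le\varepsilon$. Assumption A\ref{assump_alpha} ensures $\theta>1$ in both regimes, so the intrinsic cylinders remain well ordered and the hypotheses of the flatness step transfer to each $u_k$. Induction yields $\sup_{Q_{\rho_\star^k}^-}|u-\xi_k|\le\rho_\star^{k\alpha}$ and, since $|\xi_{k+1}-\xi_k|\le\rho_\star^{k\alpha}$, the $\xi_k$ converge to $u(0,0)$. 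A standard interpolation between consecutive dyadic-type scales then upgrades the discrete control into the continuous estimate of Theorem \ref{t1.2}.

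The principal obstacle is the approximation step. The singular character $p<2$ precludes Lipschitz bounds on $\nabla u$ and forces the limit passage to be carried out in weak formulation only, while the nonlinear dependence of the diffusion coefficient on $u$ itself (not only on $\nabla u$) dictates that the very identity of the limit PDE changes with the size of $u(x_0,t_0)$. Matching the two resulting tangent H\"older exponents in the intrinsic metric is precisely what produces the $\max$ structure in \eqref{Sharp_alpha} and imposes the upper bound of A\ref{assump_alpha}; moreover, in the fast diffusion regime $m+p<3$, certifying that the induction step can be closed requires a careful balance between the singular behavior of $|u|^{m-1}$ near $u=0$ and the contraction of the rescaled source, a balance encoded exactly by the exponent $\kappa$ above.
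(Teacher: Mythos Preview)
Your iteration scheme has a structural gap that is specific to the $(m,p)$-Laplacian and does not occur for the pure evolutionary $p$-Laplacian. You define $u_k(x,t)=\rho_\star^{-k\alpha}\bigl(u(\rho_\star^k x,\rho_\star^{k\theta}t)-\xi_k\bigr)$ and assert that each $u_k$ solves an $(m,p)$-Laplacian equation on $Q_1^-$. This fails whenever $m>1$: the operator in \eqref{1.1} is \emph{not} invariant under vertical translations $u\mapsto u-\xi_k$, because the diffusion coefficient $m|u|^{m-1}$ depends on the value of $u$ itself, not only on $\nabla u$. A direct computation shows that $u_k$ actually satisfies
\[
\partial_t u_k-\Div\bigl(m\,|u_k+\rho_\star^{-k\alpha}\xi_k|^{m-1}|\nabla u_k|^{p-2}\nabla u_k\bigr)=f_k,
\]
which is no longer of the form \eqref{1.1}; neither your approximation lemma nor the Caccioppoli estimate for \eqref{1.1} applies to it, and the induction cannot be closed. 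Worse, since $\xi_k\to u(0,0)$, the shift $\rho_\star^{-k\alpha}\xi_k$ blows up whenever $u(0,0)\neq0$, so the perturbation is not even uniformly bounded in $k$.

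The paper avoids this by \emph{never} subtracting constants in the iteration. The rescaling is simply $v_k(x,t)=\lambda^{-k\alpha}u(\lambda^k x,\lambda^{k\theta}t)$, which does preserve \eqref{1.1}, but then one only obtains $\|u\|_{L^\infty(Q^-_{\lambda^k})}\le\lambda^{k\alpha}$ under the extra hypothesis $|u(0,0)|\le\lambda^{k\alpha}/4$ (Propositions \ref{1stStepInduc}--\ref{induction}). The complementary regime, where $|u(0,0)|$ is large relative to the scale, is handled by a \emph{separate} mechanism in the proof of Theorem \ref{t1.2}: one rescales by $\mu:=(4|u(0,0)|)^{1/\alpha}$ so that $|w(0,0)|=\tfrac14$, uses the interior H\"older estimate (Theorem \ref{ThmHolderEst}) to force $|w|\ge\tfrac18$ on a universal cylinder $Q^-_{\rho_0}$, and only then reads the equation as a nondegenerate evolution $p$-Laplacian with continuous bounded coefficient $\mathfrak{A}_0=m|w|^{m-1}\in[m8^{1-m},m\mathrm{C}^{m-1}]$, importing the sharp exponent from Theorem \ref{ThmSharp-p-Laplace}. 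Your attempt to fold this dichotomy into the approximation lemma (``if $u(x_0,t_0)$ is bounded away from zero the limit is $p$-caloric; otherwise $(m,p)$-caloric'') conflates these two distinct stages: in the compactness step the limit is \emph{always} a homogeneous $(m,p)$-caloric function, and the $p$-Laplacian tangent appears only \emph{after} the $(m,p)$-iteration has been exhausted at the scale matched to $|u(0,0)|$.
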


\begin{remark}\label{r2.1}
In the proofs of Theorem x and Y, we require some smallness condition on $\|u\|_{L^{\infty}}$ and $\|f\|_{L^{q, r}}$, but those conditions are not restrictive. Indeed, for any $\sigma>0$ and $l>0$ fixed, we can find a positive constant $\rho =\rho(\sigma, l, \|u\|_{L^{\infty}}, \|f\|_{L^{q, r}})$ such that the function
\[
v_{\rho}(x,t)\defeq \rho^{l}u(\rho^{l}x,\rho^{\tau}t),
\]
satisfies the assumptions of Lemma \ref{l2.1}, where $\tau:=s(m-1)+2s(p-1)>0$,
\[
0<\rho \le \min\left\{ 1, \frac{1}{ \sqrt[l]{\|u\|_{L^{\infty}(Q^{-}_1)}}},\, \sqrt[\kappa_0]{\frac{\sigma}{\|f\|_{L^{q, r}(Q^{-}_1)}}} \right\},
\]
and
\[
\begin{array}{rcl}
  \kappa_0 & \defeq & l[2(p-1)+m]-\left(\frac{ln}{q}+\frac{(2p-1)l+l(m-1)}{r}\right) \\
   & = & l(m+p-2)\left[\left(1-\frac{1}{r}\right)+\frac{1}{r(m+p-2)}\right] + lp\left[1-\left(\frac{n}{pq}+\frac{1}{r}\right)\right]
\end{array}
\]
is a positive constant thanks to \eqref{w-cc}.
\end{remark}

We close this section commenting about the  optimal value of $\alpha_0$. Although it is still unknown, previous results indicate that $\alpha_0$ could be
\begin{equation}\label{SharpAlpha_Hom}
  \alpha_0 = \min\left\{1, \frac{p-1}{m+p-3}\right\}.
\end{equation}
In fact, the fundamental solution of
\[
  \frac{\partial u}{\partial t}-\Div(m\lvert u\rvert^{m-1}\lvert \nabla u \rvert^{p-2}\nabla u) = 0,
\]
is given by the Barenblatt function
\[
\mathcal{B}_{m, p}(x, t) \defeq \left\{
\begin{array}{cc}
 \frac{1}{t^{n\lambda_0}}\left[1-b(m, n, p)\left(\frac{|x|}{t^{\lambda_0}}\right)^{\frac{p}{p-1}}\right]_{+}^{\frac{p-1}{m+p-3}} & t>0 \\
 0 & t \le 0,
\end{array}
\right.
\]
where
\[
\lambda_0^{-1} = n(m+p-3) \quad \text{and} \quad b(m, n, p) = \frac{p-1}{p}\frac{m+p-3}{(m+p-2)}\lambda_0^{\frac{1}{p-1}},
\]
which leads to believe that $\alpha_0$ in \eqref{SharpAlpha_Hom} is optimal. We refer the reader to \cite[p. 2012]{Iva95}, \cite[Remark 3.4]{Iva00} and \cite[p. 299]{Stur18}.

Keeping that in mind, we can rewrite the second exponent in \eqref{Sharp_alpha} as
{\small{
\[
\frac{(pq-n)r-pq}{q[(r-1)(m+p-2)+1]} = \frac{p\left[1-\left(\frac{n}{pq}+\frac{1}{r}\right)\right]}{p\left[1-\left(\frac{n}{pq}+\frac{1}{r}\right)\right] + \left\{\left[\frac{3}{r}+m\left(1-\frac{1}{r}\right)+\frac{n}{q}\right]-2\right\}} \in (0, 1),
\]}}
which is well-defined according to \eqref{cc}.

\section{Auxiliary tools and results}

Next, we will introduce some auxiliary results. We start with the notion of weak solutions for our scenario.

\begin{definition}\label{d2.1} We say that a locally bounded function $u$ is a {\it weak solution} of \eqref{1.1} in $Q^{-}_1$, on condition that $u \in C_{\loc}(-1, 0; L_{\loc}^2(B_1))$ and $|u|^{\frac{m+p-2}{p-1}} \in L^{p}_{\loc}(-1, 0;W_{\text{loc}}^{1, p}(B_1))$, and for every compact set $\mathrm{K}\subset B_1$, every $[t_1, t_2] \subset (-1, 0]$ and $\psi\in H^1_{\loc}(-1, 0; L^2(\mathrm{K}))\cap L^{p}_{\loc}(-1,0; W_0^{1, p}(\mathrm{K}))$ there holds
\[
\displaystyle \left.\int_{\mathrm{K}} u \psi\,dx \right|^{t_2}_{t_1} + \int_{t_1}^{t_2} \int_{\mathrm{K}} \left[-u \frac{\partial \psi}{\partial t} + m|u|^{m-1}|\nabla u|^{p-2}\nabla u \cdot \nabla \psi \right]\,dx\,dt = \int_{t_1}^{t_2} \int_{\mathrm{K}} f \psi\,dx\,dt.
\]
Observe that the above integrals in this definition are convergent. Indeed, since
\[
\nabla |u|^{\frac{m+p-2}{p-1}} = \left(\frac{m+p-2}{p-1}\right)(\text{sgn}\,u)|u|^{\frac{m-p}{p-1}}u\nabla u \in L^{p}_{\loc}(B_1),
\]
which implies
\[
\big|\nabla |u|^{\frac{m+p-2}{p-1}}\big| = \left(\frac{m+p-2}{p-1}\right)|u|^{\frac{m-1}{p-1}}|\nabla u| \in L^{p}_{\loc}(B_1)
\]
hence,
\[
m|u|^{m-1}|\nabla u|^{p-2}\nabla u \in L^{p'}_{\loc}(B_1),
\]
where $p'$ is the H\"older conjugate of $p$.
\end{definition}

We highlight that under the condition
\[
\frac{\sigma+1}{\sigma+2}>\frac{1}{p}-\frac{1}{n} \quad \text{where} \quad \sigma=\frac{m-1}{p-1},
\]
weak solutions to the homogeneous problem are bounded. For more details, see the Ivanov's works \cite{Iva94-97} and \cite{Iva00}.

In the inhomogeneous scenario, O'Leary in \cite{Leary00} proved that a sufficient condition to local boundedness of weak solutions is the inclusion $L^{q, r}(\Omega_T) \subset L^{\beta, \beta}(\Omega_T)$ with $\beta > \frac{\mu}{\mu-1}$, where
\[
\mu = \left(\frac{n+2}{2n}\right)p+m-1,
\]
$\Omega_T$ is a parabolic domain of the form $\Omega\times (0,T]$, for $\Omega \subset \mathbb{R}^n$ an open set, and $T>0$. Hence, to guarantee the boundedness of weak solutions, it is enough to assume $r \ge q \ge \beta > \frac{\mu}{\mu-1}$ (cf. \cite{Stur18}). Moreover, we observe that if $n \ge 2$, then
\[
p> \max\left\{1, \frac{2n}{n+2}\right\} \quad \text{and} \quad m \ge 1 \quad \Rightarrow \quad \mu \ge1,
\]
and if $n=1$, we have
\[
p> 1 \quad \text{and} \quad m \ge \frac{3}{2} \quad \Rightarrow \quad \mu \ge1.
\]

As mentioned before, we argue by approximation methods, which require a compactness result for the weak solutions of \eqref{1.1}, that in our case is given by a Caccioppoli type estimate. To prove this result, we need to introduce an equivalent definition of weak solutions via the {\it Steklov average} of $u$, which is given by the function $u_h$ defined as follows:
\begin{equation}
	u_{h}\coloneqq \left\{
	\begin{array}{rcrcl}
	\displaystyle	\frac{1}{h}\int\limits_{t}^{t+h} u(.,\tau)d\tau, \ {if} \ t\in (0,T-h],\\
		0 ,\  {if} \ t \in (T-h,T],
	\end{array}
	\right.
\end{equation}
for $0<h<T$.

\begin{definition}\label{d2.2} A locally bounded function $u$ is called a {\it local weak solution} of \eqref{1.1} in $Q^{-}_1$, if $u \in C_{\loc}(-1, 0; L^2_{\loc}(B_1))$ and $|u|^{\frac{m+p-2}{p-1}} \in L^{p}_{\loc}(-1, 0;W_{\text{loc}}^{1, p}(B_1))$, and for every compact set $K\subset B_1$, every $[t_1, t_2] \subset (-1, 0]$ and $\psi\in H^1_{\loc}(-1, 0; L^2(K))\cap L^p_{\loc}(-1,0; W_{\loc}^{1, p}(K))$ there holds
\[
	\displaystyle \int_{K\times \{t\}} \left[(u_h)_t \psi + (m|u|^{m-1}|\nabla u|^{p-2}\nabla u)_h \cdot \nabla \psi \right]\,dx =\int_{K\times \{t\}} f_h \psi\,dx.
\]
\end{definition}

The Definition \ref{d2.2} can be obtained from the energy estimate given by \cite[Ch.3]{D93}. See also \cite[Proposition 2.1]{BJdaSR22} for more details. Now, we present the Caccioppoli type estimate.

\begin{proposition}[{\bf Caccioppoli estimate}]\label{Cacci_Est} Let $u$ be a weak solution to \eqref{1.1} in $Q^{-}_1$ and $\mathrm{K} \times [t_1, t_2] \subset B_1 \times (-1, 0]$. Then, there exists a constant $\mathrm{C} = \mathrm{C}(n, p, \mathrm{K}\times [t_1, t_2]) >0$  such that
{\small{
\[
\begin{array}{rcl}
 \displaystyle \sup\limits_{t \in (t_{1}, t_{2})} \int\limits_{\mathrm{K}} u^{2}\xi^p dx + \int\limits_{t_{1}}^{t_{2}}\int\limits_{\mathrm{K}} |u|^{m-1}|\nabla u|^{p}\xi^{p} dxdt  & \le &
\displaystyle \mathrm{C} \int\limits_{t_{1}}^{t_{2}}\int\limits_{\mathrm{K}} u^{2}\xi^{p-1}\left|\frac{\partial\xi}{\partial {t}}\right|dxdt+\int\limits_{t_{1}}^{t_{2}}\int\limits_{\mathrm{K}}|u|^{m-1}|u|^{p}|\nabla \xi|^{p} dxdt \vspace{0.2cm}\\
   & + & \displaystyle \mathrm{C}\|f\|_{L^{q,r}(Q^{-}_1)}^{2},
\end{array}
\]}}
for every test function $\xi \in C^{\infty}_{0}(\mathrm{K}\times (t_{1},t_{2}))$ in such a way that $0 \leq \xi \leq 1$.
\end{proposition}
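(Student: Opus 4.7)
The strategy is the classical one for Caccioppoli-type inequalities in doubly nonlinear parabolic problems: test the Steklov-averaged weak formulation (Definition \ref{d2.2}) with $\psi = u_h\,\xi^p$ for a cutoff $\xi \in C_0^\infty(\mathrm{K}\times(t_1,t_2))$, integrate in time over $(t_1,\tau)$ for arbitrary $\tau\in(t_1,t_2)$, and pass to the limit $h\to 0^+$ using the regularity $u\in C_{\loc}(-1,0;L^2_{\loc}(B_1))$ and $|u|^{(m+p-2)/(p-1)}\in L^p_{\loc}(-1,0;W^{1,p}_{\loc}(B_1))$. Taking the supremum over $\tau$ at the end produces the $\sup_t\int u^2\xi^p$ term on the left-hand side. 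Since $\xi$ vanishes at $t=t_1$, there is no initial boundary contribution.

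The time term is handled via the identity $(u_h)_t\,u_h\,\xi^p = \tfrac{1}{2}\partial_t(u_h^2)\,\xi^p$; an integration by parts in $t$ produces, after $h\to 0$,
\[
\frac{1}{2}\int_{\mathrm{K}} u^2(\cdot,\tau)\,\xi^p\,dx \;-\; \frac{p}{2}\int_{t_1}^{\tau}\!\!\int_{\mathrm{K}} u^2\,\xi^{p-1}\,\partial_t\xi\,dx\,dt,
\]
which contributes the first left-hand quantity and the first right-hand bulk term once we take $\sup_\tau$. For the divergence term we use the product rule $\nabla(u\xi^p)=\xi^p\nabla u+p\,u\,\xi^{p-1}\nabla\xi$, so that the principal part gives
\[
\int_{t_1}^{\tau}\!\!\int_{\mathrm{K}} m|u|^{m-1}|\nabla u|^p \xi^p\,dx\,dt,
\]
while the cross-term $mp\int\int |u|^{m-1}|u||\nabla u|^{p-1}\xi^{p-1}|\nabla\xi|$ is dominated by Young's inequality with exponents $p$ and $p/(p-1)$:
\[
|u|^{m-1}|u||\nabla u|^{p-1}\xi^{p-1}|\nabla\xi|\;\le\;\varepsilon\,|u|^{m-1}|\nabla u|^p\xi^p \;+\; C(\varepsilon,p)\,|u|^{m-1}|u|^p|\nabla\xi|^p,
\]
and the first piece is absorbed into the principal part by choosing $\varepsilon$ small.

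The source contribution $\int\int f\,u\,\xi^p$ is treated by Young's inequality $|f u|\le \tfrac{\delta}{2}u^2+\tfrac{1}{2\delta}f^2$, followed by Hölder in the mixed-norm space $L^{q,r}$: on the bounded space-time support of $\xi$, and under assumption A\ref{assump_w-cc} (which together with the boundedness hypotheses forces $q,r$ large enough for the embedding to hold), one has $\int\int f^2\,dx\,dt \le C\,\|f\|_{L^{q,r}(Q_1^-)}^2$ with $C$ depending only on $n,p$ and $\mathrm{K}\times[t_1,t_2]$. The leftover $\delta\int\int u^2\xi^{2p}$ is dominated by $\delta\,(t_2-t_1)\sup_\tau\int u^2\xi^p$ (using $\xi\le 1$), and for $\delta$ small enough it is absorbed into the left-hand side; if $(t_2-t_1)$ is not small a posteriori, one iterates on a finite partition of $(t_1,t_2)$.

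The main obstacle is the source term: converting an $L^2$ control into the desired $\|f\|_{L^{q,r}}^2$ control requires the mixed-norm Hölder step to be carried out sharply, and one must be careful that the absorption argument is consistent with the compatibility range $\frac{1}{r}+\frac{n}{pq}<1<\frac{2}{r}+\frac{n}{q}$ used elsewhere in the paper. Everything else (the Steklov averaging, the algebraic identities involving $|u|^{m-1}|\nabla u|^p$, and the Young absorption in the divergence term) is essentially the standard DiBenedetto-type energy computation, and by tracking the constants only through $n$, $p$ and the geometry of $\mathrm{K}\times[t_1,t_2]$ one arrives at the claimed inequality.
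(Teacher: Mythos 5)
Your proposal is correct and follows exactly the route the paper takes: the paper's proof is only a two-line sketch ("take $\psi=u_h\xi^p$ in Definition \ref{d2.2}, let $h\to0$, and use Young's inequality", deferring details to \cite{FS08}, \cite{Ara20} and \cite{BJdaSR22}), and your write-up is a faithful, correctly executed expansion of that same computation, including the absorption of the cross term and the mixed-norm treatment of the source. The one point you rightly flag as delicate---passing from $\iint f^2$ to $\|f\|_{L^{q,r}}^2$, which needs $q,r\ge 2$ on the bounded support of $\xi$---is glossed over in the paper and its references as well, so it is not a gap relative to the paper's own argument.
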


\begin{proof}
The proof follows the same lines to the one presented in \cite[Proposition 3.1]{FS08}. In effect, the main idea consists of taking $\psi=u_{h}\xi^{p}$ as a test function in Definition \ref{d2.2} and, by letting $h$ goes to $0$, we invoke Young's inequality to conclude the proof. For instance, see  \cite[Proposition 2.1]{Ara20} and \cite[Proposition 22]{BJdaSR22}.
\end{proof}

Next, we present the interior H\"{o}lder regularity estimates for doubly nonlinear singular/degenerate PDEs as in \eqref{1.1}. Indeed, such estimates were addressed in \cite{Iva89}, \cite{Iva91}, \cite{Iva95}, \cite{Iva97}, \cite{Iva98}, \cite{Iva00}, \cite{PV93} and \cite{Vesp92}. Moreover, for the doubly nonlinear singular evolution models, that is, $1<p<2$ and $m\geq 1$, the regularity in H\"{o}lder spaces can be found in \cite[Theorem 2.4]{Ciani-Vespri00}, \cite[Theorem 1.1]{Iva1994}, \cite[Theorem 1.3]{Iva98}, \cite{Vesp22} and references therein.

\begin{theorem}\label{ThmHolderEst} Let $u$ be a locally bounded weak solution of \eqref{1.1}. Suppose that A\ref{assump_f} and A\ref{assump_w-cc} are in force. Then $u$ is locally H\"{o}lder continuous in $\Omega_T$, i.e., for every compact subset $\mathrm{K} \subset \Omega_T$, there exist constants $\gamma>1$ and $\alpha_0 \in (0, 1)$ such that
\[
|u(x_1, t_1)-u(x_2, t_2)| \leq \gamma\left(\frac{|x_1-x_2|^{\alpha_0}+ \|u\|_{L^{\infty}(\mathrm{K})}^{\frac{m+p-3}{p}}|t_1-t_2|^{\frac{\alpha_0}{\bar{p}}}}{(m,p)-\dist(\mathrm{K}, \partial_p \Omega_T)^{\alpha_0}} \right),
\]
for every $(x_1, t_1), (x_2, t_2) \in \mathrm{K}$, where $\bar{p}$ is a constant given by \eqref{p(m)}, $\Omega_T:= \Omega \times (0, T]$ and
\[
(m,p)-\dist(\mathrm{K}, \partial_p \Omega_T) \defeq \inf_{(x, t) \in \mathrm{K} \atop{(y, s) \in \partial_p \Omega_T}} \left\{|x-y|+\|u\|_{L^{\infty}(\mathrm{K})}^{\frac{m+p-3}{p}}|t-s|\right\}.
\]
\end{theorem}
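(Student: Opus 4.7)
The plan is to prove Theorem~\ref{ThmHolderEst} via the classical intrinsic scaling method of DiBenedetto, as adapted to doubly nonlinear singular/degenerate equations by Ivanov, Porzio--Vespri, Vespri and the collaborators cited just before the statement. Fix $(x_0,t_0) \in \mathrm{K}$ and work on intrinsic parabolic cylinders
\[
Q_{\rho}^{\omega} \defeq B_\rho(x_0) \times \bigl(t_0 - c\,\omega^{3-m-p}\rho^{\bar p},\, t_0\bigr],
\]
where $\omega$ is an upper bound for the essential oscillation of $u$ on $Q_{\rho}^{\omega}$ and $c$ is a small universal constant. The weight $\omega^{3-m-p}$ compensates the non-homogeneity of $\mathcal{Q}_{m,p}$ and, after iteration, produces the factor $\|u\|_{L^{\infty}(\mathrm{K})}^{(m+p-3)/p}$ that multiplies $|t_1-t_2|$ in the $(m,p)$-distance of the statement; the choice $\bar p = p$ for $m>1$ versus $\bar p = 2$ for $m=1$ is dictated by whether the porous-medium degeneracy in $u$ is present or absent. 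The core step to establish is an oscillation-decay lemma: there exist universal $\eta, \sigma \in (0,1)$ such that
\[
\mathrm{osc}_{\,Q_{\sigma \rho}^{\eta \omega}}\, u \ \le\ \eta\,\omega.
\]
Iterating this over a sequence of nested intrinsic cylinders $Q_{\sigma^k\rho_0}^{\eta^k\omega_0}$ yields Hölder continuity with exponent $\alpha_0 = \log\eta / \log\sigma \in (0,1)$.

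The oscillation-decay lemma is proved by the standard De Giorgi alternative. After reducing to $\mu^{+}-\mu^{-} \le \omega$ with $\mu^{\pm}$ the essential sup/inf on $Q_{\rho}^{\omega}$, one distinguishes two alternatives based on whether, for some intermediate time slice, the measure of $\{u(\cdot,t_*) > \mu^{+}-\omega/2\} \cap B_{\rho}$ is smaller or larger than half of $|B_{\rho}|$. The Caccioppoli estimate of Proposition~\ref{Cacci_Est} applied to the truncations $(u-k)_{\pm}$ with cut-off functions adapted to the intrinsic cylinder yields an energy inequality on each super-level. In the first alternative one runs a De Giorgi $L^{\infty}$ iteration on $(u-k)_{+}$ to push $u$ strictly below $\mu^{+}$ in a smaller intrinsic sub-cylinder; the second alternative is symmetric. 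In the singular range $1<p<2$ the direct De Giorgi iteration is insufficient to propagate positivity across time slices, so one supplements it with a logarithmic energy estimate (obtained by testing with a function of the form $\bigl(\log^{+}\!\bigl(\omega/(\omega-(u-\mu^{-}))\bigr)\bigr)^{2}$), which plays the role of the expansion-of-positivity lemma; this is the strategy of \cite{Ciani-Vespri00, Iva1994, Iva98, Vesp22}. The source term $f$ is absorbed via Hölder in mixed norms, since $\tfrac{1}{r}+\tfrac{n}{pq}<1$ from A\ref{assump_w-cc} makes the $L^{q,r}$-contribution strictly sub-critical with respect to the intrinsic scaling, and hence a perturbation that does not affect the structural constants $\eta, \sigma$.

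Once the oscillation-decay lemma is established, iteration gives $\mathrm{osc}_{\,Q_{\sigma^k\rho_0}^{\eta^k\omega_0}} u \le \eta^k\omega_0$. Translating this geometric decay in intrinsic radii back to the parabolic distance
\[
d_{m,p}\bigl((x_1,t_1),(x_2,t_2)\bigr) \defeq |x_1-x_2| + \|u\|_{L^{\infty}(\mathrm{K})}^{(m+p-3)/p}|t_1-t_2|,
\]
through comparison of radii and a standard covering argument, produces the pointwise Hölder estimate of the statement for every pair of points in $\mathrm{K}$, with $\alpha_0$ depending only on $n, m, p, q, r$ and $\|u\|_{L^{\infty}(\mathrm{K})}$.

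The main obstacle is the unified treatment of the singular regime $\max\{1, 2n/(n+2)\}<p<2$ together with arbitrary $m \ge 1$: the coupling of the singularity in $\nabla u$ (through $|\nabla u|^{p-2}$) with the degeneracy in $u$ (through $|u|^{m-1}$) forces the intrinsic time-scaling to depend on $\omega$ through the exponent $3-m-p$, which can have either sign across the slow ($m+p>3$) and fast ($m+p<3$) diffusion regimes. Careful bookkeeping of these scaling factors is required to ensure the alternative and the subsequent iteration close with structural constants independent of $\omega$, and is precisely what produces both the weight $\|u\|_{L^{\infty}(\mathrm{K})}^{(m+p-3)/p}$ and the dichotomy $\bar p \in \{p, 2\}$ in the final intrinsic distance.
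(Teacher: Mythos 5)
You should first be aware that the paper does not actually prove Theorem~\ref{ThmHolderEst}: it is stated as an auxiliary quoted result, justified only by the references listed immediately before it (\cite{Iva89, Iva91, Iva95, Iva97, Iva98, Iva00, PV93, Vesp92} and, for the singular range, \cite{Ciani-Vespri00, Iva1994, Vesp22}). Your outline correctly identifies the method used in that literature --- intrinsic cylinders weighted by $\omega^{3-m-p}$, an oscillation-decay lemma proved by De Giorgi-type alternatives with logarithmic estimates for the expansion of positivity in the singular range $1<p<2$, and iteration to extract the exponent $\alpha_0$ --- so in that sense you have reconstructed the route the paper implicitly relies on rather than invented a different one.

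As a self-contained proof, however, the proposal has real gaps. The oscillation-decay lemma is the entire content of the cited papers (each of which devotes dozens of pages to it), and you only name its ingredients; in particular, in the singular regime the first and second alternatives are \emph{not} symmetric (the degeneracy $|u|^{m-1}$ acts near the infimum level and the singularity $|\nabla u|^{p-2}$ acts everywhere), and closing the iteration with constants independent of $\omega$ across the regimes $m+p\gtrless 3$ is precisely the delicate point. Moreover, Proposition~\ref{Cacci_Est} as stated is an energy estimate for $u$ itself, not for the truncations $(u-k)_{\pm}$ on intrinsic cylinders, so the inequality you invoke must itself be derived. The absorption of the inhomogeneity $f\in L^{q,r}$ is also asserted rather than checked: under A\ref{assump_w-cc} the contribution of $f$ is subcritical for the \emph{spatial} scaling, but one must verify that it remains subcritical after the $\omega$-dependent time rescaling in both diffusion regimes, which is exactly where the compatibility condition \eqref{cc} enters. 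Finally, a small inaccuracy: in the statement the Hölder exponent $\alpha_0$ is universal, with the dependence on $\|u\|_{L^{\infty}(\mathrm{K})}$ carried entirely by the intrinsic distance and the constant $\gamma$, whereas your conclusion allows $\alpha_0$ to depend on $\|u\|_{L^{\infty}(\mathrm{K})}$. None of these issues indicates a wrong strategy, but they mean the argument as written is an outline of the literature's proof rather than a proof.
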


Next, we recall the sharp regularity estimates for evolution $p$-Laplace equations of the form
\begin{equation}\label{Eqp-Lapla}
\frac{\partial u}{\partial t}-\Div({\mathcal A}(x, t)\lvert \nabla u \rvert^{p-2}\nabla u) = f(x, t) \quad \text{in} \quad Q^{-}_1,
\end{equation}
where $\max\left\{1, \dfrac{2n}{n+2}\right\} \leq p < \infty$, $f \in L^{q,r}(Q^{-}_1)$ satisfies the compatibility conditions \eqref{w-cc}, and the coefficient ${\mathcal A}$ is uniformly bounded, {\it i.e.},
\begin{equation}\label{ContCondp_laplace}
0 < \mathrm{L}_0 \le {\mathcal A}(x, t) \le \mathrm{L}_1< \infty \quad \text{in} \quad Q^{-}_1.
\end{equation}
This is the content of the next result.

\begin{theorem}\label{ThmSharp-p-Laplace} Assume that $u$ is a bounded weak solution to \eqref{Eqp-Lapla} in $Q^{-}_1$. Suppose further that assumptions A\ref{assump_f}, A\ref{assump_w-cc} and \eqref{ContCondp_laplace} there hold.. Then, $u$ is locally $C^{0, \hat{\alpha}}$ in space variable and $C^{0, \frac{\hat{\alpha}}{\hat{\theta}}}$ in time variable,  where
\[
\hat{\alpha} \defeq \frac{(pq-n)r-pq}{q[(p-1)r-(p-2)]} \qquad \text{and} \qquad \hat{\theta} \defeq 2\hat{\alpha} + (1-\hat{\alpha})p.
\]
Moreover, there exists a universal constant $\mathrm{C}_0>0$ such that
\[
  \displaystyle  [u]_{C^{\hat{\alpha}, \frac{\hat{\alpha}}{\hat{\theta}}}\left(Q^{-}_{\frac{1}{2}}\right)} \leq \mathrm{C}_0\left[\|u\|_{L^{\infty}(Q^{-}_1)} + \|f\|_{L^{q, r}(Q^{-}_1)}\right].
\]
\end{theorem}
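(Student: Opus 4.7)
The plan is to adapt the geometric tangential method of Teixeira and Urbano \cite{TU14} to the variable-coefficient setting ensured by \eqref{ContCondp_laplace}, combining an approximation lemma with an iteration along a sequence of intrinsically scaled cylinders. First I would normalize: via the rescaling $v(x,t) := \kappa\, u(\rho x, \rho^{\hat\theta} t)$, with $\hat\theta = 2\hat\alpha + (1-\hat\alpha)p$ and a suitable $\kappa$, one reduces to the case $\|u\|_{L^\infty(Q^{-}_1)} + \|f\|_{L^{q,r}(Q^{-}_1)} \le \delta_0$ for a prescribed small $\delta_0>0$; the compatibility condition $\frac{1}{r}+\frac{n}{pq}<1$ in A\ref{assump_w-cc} is precisely what makes the rescaled source's $L^{q,r}$-norm shrink by a positive power of $\rho$, so this normalization costs only universal constants.

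Next I would establish an approximation lemma: for each $\eta>0$ there exists $\delta>0$ such that if $\|u\|_{L^\infty(Q^{-}_1)}\le 1$ and $\|f\|_{L^{q,r}(Q^{-}_1)}\le \delta$, then there is a bounded weak solution $v$ of the frozen homogeneous equation $v_t - \Div(\bar{\mathcal{A}}|\nabla v|^{p-2}\nabla v)=0$ in $Q^{-}_{1/2}$, with $\bar{\mathcal{A}}\in [\mathrm{L}_0,\mathrm{L}_1]$ constant, satisfying $\|u-v\|_{L^\infty(Q^{-}_{1/2})}\le \eta$. The proof is by contradiction and compactness: a sequence of counterexamples $u_k$ with $f_k\to 0$ in $L^{q,r}$ must converge, up to subsequences and diagonalisation on the frozen coefficient, to a homogeneous solution; this uses a Caccioppoli-type estimate (in the spirit of Proposition \ref{Cacci_Est}) together with Aubin--Lions compactness. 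The limit $v$ enjoys the interior $C^{1,\beta}_x$-regularity of the homogeneous $p$-parabolic equation; hence for any admissible $\hat\alpha\in (0,1)$ one can choose $\lambda\in (0,1/2)$ with $\sup_{Q^{-}_\lambda}|v-v(0,0)|\le \tfrac12\lambda^{\hat\alpha}$, and taking $\eta=\tfrac12\lambda^{\hat\alpha}$ in the approximation together with the triangle inequality yields the one-step oscillation decay $\sup_{Q^{-}_\lambda}|u-u(0,0)|\le \lambda^{\hat\alpha}$.

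Finally I would iterate: define $u_k(x,t):=\lambda^{-k\hat\alpha}u(\lambda^k x,\lambda^{k\hat\theta}t)$, which again solves an equation of the same form with coefficients in $[\mathrm{L}_0,\mathrm{L}_1]$ and a new source $f_k$. The choice of $\hat\theta=2\hat\alpha+(1-\hat\alpha)p$ is exactly what is needed for the natural $p$-parabolic scaling to be preserved, and the compatibility assumption $\frac{1}{r}+\frac{n}{pq}<1<\frac{2}{r}+\frac{n}{q}$ forces the sharp value $\hat\alpha=\frac{(pq-n)r-pq}{q[(p-1)r-(p-2)]}$ to be the largest exponent for which the control $\|f_k\|_{L^{q,r}}\le \delta$ persists through the iteration. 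Induction then yields $\sup_{Q^{-}_{\lambda^k}}|u-u(0,0)|\le \lambda^{k\hat\alpha}$ for every $k\ge 0$; interpolating between consecutive radii converts this into $|u(x,t)-u(0,0)|\le \mathrm{C}(|x|^{\hat\alpha}+|t|^{\hat\alpha/\hat\theta})$, and centring the argument at arbitrary $(x_0,t_0)\in Q^{-}_{1/2}$ produces the quantitative semi-norm estimate. The main obstacle is the approximation step in the singular range $p<2$: obtaining strong $L^p$-convergence of the gradients requires combining the Caccioppoli estimate with the $p$-parabolic interior Lipschitz theory for the constant-coefficient limit, which is where the assumption $p>\tfrac{2n}{n+2}$ intervenes to secure both boundedness of weak solutions and the compactness needed to close the contradiction argument.
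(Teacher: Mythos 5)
Your proposal follows essentially the same route the paper takes: Theorem \ref{ThmSharp-p-Laplace} is not proved from scratch here but is obtained by invoking the geometric tangential scheme of Teixeira--Urbano \cite[Theorem 3.4 and Section 4]{TU14} --- normalization, a compactness/approximation lemma, one-step oscillation decay from the $C^{1,\beta}$ theory of the frozen homogeneous profile, iteration in $\hat\theta$-intrinsic cylinders, and discrete-to-continuous interpolation --- and your sketch reproduces that scheme faithfully, including the correct role of $\hat\theta=2\hat\alpha+(1-\hat\alpha)p$ and the identification of $\hat\alpha$ as the largest exponent compatible with the decay of $\|f_k\|_{L^{q,r}}$.

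One remark on the only genuinely new point in the singular range: you locate the difficulty for $\max\{1,\tfrac{2n}{n+2}\}<p<2$ in the strong convergence of the gradients, whereas the precise obstruction (as the paper notes) sits in the compactness step of \cite[Lemma 3.1]{TU14}, namely the uniform bound on the time derivative needed for the Aubin--Lions/equicontinuity argument. In \cite{TU14} this bound comes from Lindqvist's estimate \cite{Lind08}, which is proved only for $p\ge 2$; for $p<2$ it must be replaced by the second-order estimates of \cite[Theorem 1.5]{DPZZ20} (see also \cite[Corollary 2.5]{FPM22} and \cite[Chapter 7]{AMS04}). Your sketch leaves this substitution implicit, so to make the argument complete in the singular range you should state explicitly which estimate supplies the control of $\partial_t u_k$.
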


The proof of Theorem \ref{ThmSharp-p-Laplace} for equations with constant coefficients, and $p \geq 2$ can be found in \cite[Theorem 3.4]{TU14}. For the case of continuous and uniformly bounded variable coefficients as in \eqref{Eqp-Lapla}-\eqref{ContCondp_laplace}, one can obtain similar results, see for instance  \cite[Section 4]{TU14}. Although the authors in \cite{TU14} only deal with the degenerate case $p \geq 2$, their arguments also work in the case $\max\left\{1, \frac{2n}{n+2}\right\} \leq p < 2$. Indeed, the only modification we need to do is in the control of the time derivative in \cite[Lemma 3.1]{TU14}. We can use the estimates along the lines of \cite[Theorem 1.5]{DPZZ20} (see also \cite[Corollary 2.5]{FPM22} and \cite[Chapter 7]{AMS04}), which works in our range of $p$, instead of the estimates in \cite{Lind08}, that are proved for the degenerate case.

The next result plays a crucial role in our arguments. Under a smallness regime on the $L^{q,r}$-norm of the source term, we show that weak solutions to \eqref{1.1} are close to the solution of the homogeneous $(m,p)$-Laplacian, in a suitable sense. This fact will allow us to import some regularity properties back to our model.

\begin{lemma}[{\bf $(m, p)$-Approximation}]\label{l2.1}
Let $u \in C_{\loc}(-1, 0; L^2_{\loc}(B_1))$ be a weak solution to \eqref{1.1} in $Q^{-}_1$ with
$\|u\|_{L^{\infty}(Q^{-}_1)} \leq 1$ and $f \in L^{q, r}(Q_1^{-})$. Given $\delta >0$, there exists $\varepsilon= \varepsilon(p,n,m,\delta)>0$ such that, if
\[
\displaystyle \|f\|_{L^{q, r}(Q^{-}_1)}\leq \varepsilon,
\]
then, we can find a function $h \in C_{\loc}^{\alpha_0, \frac{\alpha_0}{\bar{p}}}(Q^{-}_1)$ such that
\begin{equation}\label{3.1}
\sup_{Q^{-}_{\frac{1}{2}}}|u-h| <\delta.
\end{equation}
\end{lemma}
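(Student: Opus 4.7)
The plan is to argue by contradiction and compactness, in the spirit of the geometric tangential method of \cite{TU14}. Suppose the conclusion fails: there exist $\delta_0>0$, a sequence $(u_k)$ of weak solutions to $\mathcal{Q}_{m,p}[u_k]=f_k$ in $Q_1^-$ with $\|u_k\|_{L^{\infty}(Q_1^-)}\le 1$, and $(f_k)\subset L^{q,r}(Q_1^-)$ with $\|f_k\|_{L^{q,r}(Q_1^-)}\to 0$, such that
\[
\sup_{Q_{1/2}^-}|u_k-h|\ge \delta_0
\]
for every $h\in C^{\alpha_0,\alpha_0/\bar{p}}_{\loc}(Q_1^-)$. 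The aim is to construct such an $h$ as the limit of a subsequence of $(u_k)$ and reach a contradiction.

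The equicontinuity of $(u_k)$ comes from Theorem \ref{ThmHolderEst}: since A\ref{assump_f} and A\ref{assump_w-cc} are in force and the bounds $\|u_k\|_{L^\infty}\le 1$, $\|f_k\|_{L^{q,r}}\le 1$ are uniform in $k$, that theorem provides a uniform $(\alpha_0,\alpha_0/\bar{p})$-H\"{o}lder modulus of continuity on every compact subset of $Q_1^-$. Arzel\`{a}--Ascoli then yields a (non-relabeled) subsequence and a function $u_\infty\in C^{\alpha_0,\alpha_0/\bar{p}}_{\loc}(Q_1^-)$ with $u_k\to u_\infty$ uniformly on $\overline{Q_{1/2}^-}$. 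In parallel, the Caccioppoli estimate of Proposition \ref{Cacci_Est}, applied on an intermediate cylinder containing $Q_{1/2}^-$, controls
\[
\iint|u_k|^{m-1}|\nabla u_k|^{p}\,dx\,dt
\]
uniformly in $k$, so that the fluxes $\mathbf{F}_k\defeq m|u_k|^{m-1}|\nabla u_k|^{p-2}\nabla u_k$ are uniformly bounded in $L^{p'}_{\loc}(Q_1^-)$ and, up to a further subsequence, converge weakly to some $\mathbf{F}_\infty$.

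To identify $\mathbf{F}_\infty$ and pass to the limit, I would test the Steklov-averaged formulation of Definition \ref{d2.2} against an arbitrary $\varphi\in C^\infty_c(Q_{1/2}^-)$; sending the Steklov parameter to zero and then $k\to\infty$, the uniform convergence $u_k\to u_\infty$, the weak convergence $\mathbf{F}_k\rightharpoonup \mathbf{F}_\infty$, and $f_k\to 0$ in $L^{q,r}$ together give
\[
\iint\bigl[-u_\infty\varphi_t+\mathbf{F}_\infty\cdot\nabla\varphi\bigr]\,dx\,dt=0.
\]
A Minty--Browder monotonicity argument, tailored to the doubly nonlinear operator, then identifies $\mathbf{F}_\infty=m|u_\infty|^{m-1}|\nabla u_\infty|^{p-2}\nabla u_\infty$ almost everywhere, so that $u_\infty$ is a bounded weak solution of the homogeneous $(m,p)$-Laplacian in $Q_{1/2}^-$. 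Applying Theorem \ref{ThmHolderEst} to $u_\infty$ places it in $C^{\alpha_0,\alpha_0/\bar{p}}_{\loc}(Q_{1/2}^-)$, and the choice $h=u_\infty$ contradicts the standing assumption for all sufficiently large $k$.

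The main obstacle is exactly the monotonicity step, because we are in the singular range $\max\{1,2n/(n+2)\}<p<2$ with the additional weight $|u|^{m-1}$, which degenerates where $u$ vanishes. The standard monotonicity of $\xi\mapsto|\xi|^{p-2}\xi$ cannot be invoked directly and must be combined with a truncation of the weight: one works on the set $\{|u_k|>\tau\}$, applies Minty's trick there to extract the almost-everywhere convergence $\nabla u_k\to\nabla u_\infty$, and then lets $\tau\downarrow 0$, along the lines of \cite{IvaJag00,Stur17}. All other ingredients --- equicontinuity, weak compactness of the fluxes, convergence of the source, and the interior H\"{o}lder regularity for the homogeneous equation --- follow directly from the tools collected in Section 3.
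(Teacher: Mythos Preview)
Your strategy coincides with the paper's: contradiction, uniform H\"{o}lder bounds from Theorem~\ref{ThmHolderEst}, Caccioppoli estimates, Arzel\`{a}--Ascoli, and passage to the limit toward the homogeneous problem. The divergence is only in how the nonlinear flux is sent to the limit. You keep $\mathbf{F}_k=m|u_k|^{m-1}|\nabla u_k|^{p-2}\nabla u_k$ and invoke a Minty--Browder argument with truncation on $\{|u_k|>\tau\}$, which you correctly flag as the delicate step. The paper instead introduces the auxiliary sequence
\[
v_j \defeq |u_j|^{\frac{m+p-2}{p-1}},
\]
for which $|\nabla v_j|^{p} = \left(\frac{m+p-2}{p-1}\right)^{p}|u_j|^{\frac{p(m-1)}{p-1}}|\nabla u_j|^{p}$. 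Since $\|u_j\|_{L^\infty}\le 1$ and $\frac{p(m-1)}{p-1}\ge m-1$, the Caccioppoli bound on $\iint |u_j|^{m-1}|\nabla u_j|^{p}$ immediately controls $\|\nabla v_j\|_{L^p}$. Weak $L^p$-compactness of $(\nabla v_j)$, combined with the uniform convergence $u_j\to u_\infty$, identifies the weak limit as $\nabla\bigl(|u_\infty|^{\frac{m+p-2}{p-1}}\bigr)$; because the flux is, up to a multiplicative constant and a sign, $|\nabla v_j|^{p-2}\nabla v_j$, the passage to the limit in \eqref{1.1} reduces to the standard evolutionary $p$-Laplacian situation rather than the weighted one. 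This change of variables therefore bypasses the truncation machinery you anticipate, at essentially no extra cost; your route is viable but heavier.
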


\begin{proof} We argue by contradiction. Suppose the thesis of the lemma fails, {\it i.e.}, there exist $\delta_0$ and sequences
\[
(u_{j})_{j} \in C_{\loc}(-1, 0;L^2_{\loc}(B_1)) \quad \text{with} \quad  |u_{j}|^{\frac{m+p-2}{p-1}} \in L^{p}_{\loc}(-1, 0;W_{\loc}^{1,p}(B_{1}))
\]
and $(f_{j})_{j} \in L^{q,r}(Q^{-}_{1})$ such that for all $j$
\begin{equation}\label{Eq2.9}
    \frac{\partial u_{j}}{\partial t}-\Div(m|u_j|^{m -1}|\nabla u_j|^{p-2} \nabla u_j) = f_{j}(x, t) \quad \text{in} \quad Q^{-}_{1}
\end{equation}
 with
\begin{equation}\label{Eq2.10}
\lvert\lvert u_{j}\rvert\rvert_{L^{\infty}(Q^{-}_{1})} \leq 1 \qquad \text{and} \qquad \lvert \lvert f_{j} \rvert\rvert_{L^{q,r}(Q^{-}_{1})} \leq\frac{1}{j},
\end{equation}
but
\begin{equation}\label{Eq2.11}
\sup_{Q^{-}_{\frac{1}{2}} }| u_{j}-h | \geq \delta_{0} \; \;  \text{for all} \; j,
\end{equation}
for every $h \in C_{\text{loc}}^{\alpha_0, \frac{\alpha_0}{\bar{p}}}(Q^{-}_1)$. Since $u_{j}$ satisfies the Cacciopolli estimate (Proposition \ref{Cacci_Est}), that is,
{\footnotesize{
\[
\begin{array}{rcl}
 \displaystyle \sup\limits_{-1<t < 0} \int\limits_{B_1} {u_j}^{2}\xi^p dx + \int\limits_{-1}^{0}\int\limits_{B_1} |u_j|^{m-1}|\nabla u_j|^{p}\xi^{p} dxdt
 & \le & \displaystyle \mathrm{C} \int\limits_{-1}^{0}\int\limits_{B_1} {u_j}^{2}\xi^{p-1}\left|\frac{\partial\xi}{\partial {t}}\right|dxdt+\int\limits_{-1}^{0}\int\limits_{B_1}|u_j|^{m+p-1}|\nabla \xi|^{p} dxdt \vspace{0.15cm}\\
 & + & \displaystyle \mathrm{C}\|f_j\|_{L^{q,r}(Q^{-}_1)}^{2},
\end{array}
\]}}
where $\xi \in C^{\infty}_{0}(Q^{-}_{1})$ is a cutoff function such that $0\le \xi \le 1$, with $\xi = 1$ in $Q^{-}_{\frac{1}{2}}$ and $\xi = 0$ in $\partial_{p} Q^{-}_{1}$. From \eqref{Eq2.10} and the inequality above, we obtain
\begin{equation*}
    \sup\limits_{-1<t<0} \int_{B_{1}} u_j^{2}\xi^p dx + \int_{-1}^{0}\int_{B_{1}} |u_j|^{m-1}|\nabla u_j|^{p}\xi^{p} dxdt \leq \bar{\mathrm{C}}.
\end{equation*}

Now, for $j \in \mathbb{N}$, let us introduce the sequence $v_{j}= {|u_j|}^{\frac{m+p-2}{p-1}}$. Direct calculation yields
\[
|\nabla v_{j}|^{p}=\left(\frac{m+p-2}{p-1}\right)^p|u_j|^{\frac{p(m-1)}{p-1}}|\nabla u_j|^p
\]
This implies,
\begin{align*}
\|\nabla v_{j} \|_{L^p \left(Q^{-}_{\frac{1}{2}}\right)}^{p} & = \int_{-\frac{1}{2^{\theta}}}^{0}\int_{B_{1}}\lvert \nabla v_{j} \rvert^{p}dxdt \leq \int_{-1}^{0}\int_{B_{1}}|u_{j}|^{\frac{p(m-1)}{p-1}}|\nabla u_{j}|^{p}\xi^{p}dxdt \\
& \leq \int_{-1}^{0}\int_{B_{1}}|u_{j}|^{(m-1)}|\nabla u_{j}|^{p}\xi^{p}dxdt \leq \bar{\mathrm{C}}
\end{align*}

Hence, up to a subsequence, we conclude
\begin{equation}\label{Eq2.12}
\nabla v_{j} \rightharpoonup \varphi \quad \text{weakly in} \; \; L^{p}\left(Q^{-}_{\frac{1}{2}}\right).
\end{equation}

Furthermore, it follows from the sequence $(u_{j})_j$ is equicontinuous, then by employing the Arzel\`{a}-Ascoli Theorem, up to a subsequence, we get that $u_{j}$ converges uniformly to $  u_{\infty}$ in $Q^{-}_{\frac{1}{2}}.$ In particular, we have the point-wise convergence, that is,
\begin{equation}\label{Eq2.13}
v_{j}={|u_j|}^{\frac{m+p-2}{p-1}} \to {|u_{\infty}|}^{\frac{m+p-2}{p-1}} \defeq v,
\end{equation}
and we can also identity $\varphi= \nabla v$ by using \eqref{Eq2.12} and \eqref{Eq2.13}.

Finally, by passing the limit as $j \to \infty$ in \eqref{Eq2.9}, we obtain that $u_\infty$ satisfies,
\begin{equation}\label{EqHomProb}
      \frac{\partial (u_\infty)}{\partial t}-\Div(m|u_\infty|^{m -1}|\nabla u_\infty|^{p-2} \nabla u_\infty) = 0 \quad \text{in} \quad Q^{-}_{\frac{1}{2}}.
\end{equation}
It follows from \cite{Iva95} and \cite{PV93} that $u_\infty \in C_{\text{loc}}^{\alpha_0, \frac{\alpha_0}{\bar{p}}}(Q^{-}_1)$  (in the parabolic sense), for $0<\alpha_0\leq1$, and $\bar{p}>0$ as in \eqref{p(m)}. Moreover, because of the uniform convergence, we have for $j$ sufficiently large,
\[
\sup_{Q^{-}_{\frac{1}{2}}}|u_j-u_\infty| \leq \delta_0,
\]
which yields to a contradiction with \eqref{Eq2.11}.

\end{proof}

\section{Geometric $\alpha-$H\"{o}lder estimates}

In this section, we use the $(m, p)$-Approximation Lemma \ref{l2.1} to put forward a geometric iteration that will result in the desired H\"older estimates.

\begin{proposition}\label{1stStepInduc} Let $u \in C_{\loc}(-1, 0; L^2_{\loc}(B_1))$ be a weak solution to \eqref{1.1} in $Q^{-}_{1}$, with $\| u\|_{L^{\infty}\left(Q^{-}_{1}\right)} \leq 1$. There exist $\varepsilon > 0$ and $\lambda \in \left(0, \frac{1}{4}\right]$ both depending only on universal parameters $m,n,p$ and $\alpha$, such that if
\[
\|f\|_{L^{q,r}(Q^{-}_{1})} < \varepsilon,
\]
then
\[
	\|u\|_{L^{\infty}\left(Q^{-}_{\lambda}\right)}\leq \lambda^{\alpha} \qquad  \mbox{provided} \qquad |u(0,0)|\leq \frac{\lambda^{\alpha}}{4}.
\]
\end{proposition}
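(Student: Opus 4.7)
The plan is to combine the $(m,p)$-approximation Lemma \ref{l2.1} with the interior H\"older regularity of the homogeneous $(m,p)$-Laplacian (Theorem \ref{ThmHolderEst}), then calibrate the free parameters so the target bound $\lambda^{\alpha}$ emerges from three controlled contributions: the approximation error, the oscillation of the tangential profile, and the assumed smallness of $u(0,0)$.

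First, I would fix $\lambda \in (0, 1/4]$ (to be chosen later) and set $\delta \defeq \lambda^{\alpha}/8$. Letting $\varepsilon = \varepsilon(\delta, n, m, p)$ be the constant produced by Lemma \ref{l2.1}, the hypothesis $\|f\|_{L^{q,r}(Q_1^-)} < \varepsilon$ yields a profile $h \in C^{\alpha_0, \alpha_0/\bar{p}}_{\loc}(Q_1^-)$ solving the homogeneous $(m,p)$-Laplacian with $\sup_{Q_{1/2}^-}|u - h| < \delta$. Since $\|h\|_{L^\infty(Q_{1/2}^-)} \leq 1 + \delta \leq 2$, Theorem \ref{ThmHolderEst} grants a universal $C_H > 0$ with
\[
|h(x,t) - h(0,0)| \leq C_H\bigl(|x|^{\alpha_0} + |t|^{\alpha_0/\bar{p}}\bigr) \qquad \text{on } Q_{1/4}^-.
\]

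For any $(x,t) \in Q_\lambda^-$, the triangle inequality gives
\[
|u(x,t)| \leq 2\delta + C_H\bigl(\lambda^{\alpha_0} + \lambda^{\theta\alpha_0/\bar{p}}\bigr) + |u(0,0)| \leq \frac{\lambda^{\alpha}}{4} + C_H\bigl(\lambda^{\alpha_0} + \lambda^{\theta\alpha_0/\bar{p}}\bigr) + \frac{\lambda^{\alpha}}{4},
\]
having used $|x| \leq \lambda$, $|t| \leq \lambda^{\theta}$ from the intrinsic cylinder together with the standing hypothesis on $u(0,0)$. Choosing $\lambda$ small enough that $C_H(\lambda^{\alpha_0} + \lambda^{\theta\alpha_0/\bar{p}}) \leq \lambda^{\alpha}/2$ closes the estimate $\sup_{Q_\lambda^-}|u| \leq \lambda^{\alpha}$, and the dependencies cascade as $\lambda \to \delta \to \varepsilon$, all on universal parameters alone.

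The core obstacle is verifying that such a $\lambda$ can be selected, namely the two strict inequalities $\alpha < \alpha_0$ and $\alpha < \theta\alpha_0/\bar{p}$. Since $\theta = p - \alpha(m+p-3)$, the second one is algebraically equivalent to $\alpha[\bar{p} + \alpha_0(m+p-3)] < \alpha_0 p$. Both relations encode precisely the structure of the sharp exponent \eqref{Sharp_alpha} together with Assumption A\ref{assump_alpha}: one checks them case-by-case in the slow-diffusion regime $m+p \geq 3$ (where the max in \eqref{Sharp_alpha} collapses to $1$) and the fast-diffusion regime $m+p < 3$ (where the max equals the reciprocal factor), with A\ref{assump_alpha} ensuring that $\bar{p} + \alpha_0(m+p-3)$ stays positive and that the strict inequalities are retained thanks to the $\alpha_0^- < \alpha_0$ slack.
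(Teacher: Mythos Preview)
Your proposal is correct and follows essentially the same route as the paper: invoke Lemma~\ref{l2.1} to produce the profile $h$, use its $C^{\alpha_0,\alpha_0/\bar p}$ regularity to bound $|h(x,t)-h(0,0)|$ on $Q_\lambda^-$, and close via the triangle inequality $|u|\le 2\delta+\mathrm{osc}(h)+|u(0,0)|$ after calibrating $\lambda$ and $\delta$. The only difference is presentational: the paper carries out the exponent comparison $\alpha<\min\{\alpha_0,\theta\alpha_0/\bar p\}$ through an explicit case split ($m+p-2\ge 1$ versus $m+p-2<1$, with a further sub-case $m=1$) that yields concrete lower bounds for $\theta\alpha_0/\bar p$ in each regime, whereas you isolate the two needed inequalities abstractly and defer their verification to that same case analysis.
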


\begin{proof} Fix $\delta \in (0, 1)$ to be determined later. By applying the $(m, p)$-Approximation Lemma \ref{l2.1}, we can find $\varepsilon>0$ and a function $h \in C_{\text{loc}}^{\alpha_0, \frac{\alpha_0}{\bar{p}}}(Q^{-}_1)$ such that
\[
\|u-h\|_{L^{\infty}\left(Q^{-}_{\frac{1}{2}}\right)}\leq\delta.
\]
From the regularity available for $h$, we have in particular that
{\small{
	\begin{equation}\label{Eq_phi}
			  			|h(x,t)-h(y,s)|\leq \gamma\left( |x-y|^{\alpha_0} + \|h\|_{L^{\infty}\left(Q^{-}_{\frac{1}{2}}\right)}^{\frac{m+p-3}{p}}\sqrt[\bar{p}]{|t-s|^{\alpha_0}}\right) \quad \forall\,\,(x,t),(y,s) \in Q^{-}_{\frac{1}{2}}.
	\end{equation}
}}
Notice that by choosing $\lambda \in \left(0,  \frac{1}{4}\right]$, we get that $Q^{-}_{\lambda}\subset Q^{-}_{\frac{1}{2}}$. For the sake of clarity, we split the proof into two cases.

\bigskip	
\noindent\textit{Case 1:} First, we analyze the case $p+m-2\geq 1$. It follows from $A$\ref{assump_alpha} that $\alpha\leq \frac{p-1}{p+m-2}$, which implies
\[
\frac{2(p-1)}{p+m-2}\leq 1+\frac{p-1}{p+m-2}\leq\theta\leq p.
\]
Now, we state that, if  $m>1$ and $\max\left\{1, \frac{2n}{n+2}\right\}<p\leq 2$, then $h$ satisfies
\[
\sup\limits_{(x,t)\in Q^{-}_{\lambda}}|h(x,t)-h(0,0)|\leq \mathrm{C}\lambda^{\frac{\alpha_0(p-1)}{p+m-2}}.
\]
Indeed, consider $(x,t) \in Q^{-}_{\lambda}$, and notice that $\alpha_0\geq \frac{\alpha_0(p-1)}{p+m-2}$. Since $\max\left\{1, \frac{2n}{n+2}\right\}<p\leq2$, we obtain $\frac{2\alpha_0(p-1)}{p(p+m-2)} \geq \frac{\alpha_0(p-1)}{p+m-2}$. By using the fact that $\theta \geq \frac{2(p-1)}{p+m-2}$ combined with \eqref{Eq_phi}, we can estimate
\begin{equation}\label{Second_Est}
	\begin{array}{rcl}
		| h(x,t) -h(0,0)| & \leq & | h(x,t)-h(0,t)| + | h(0,t)-h(0,0)| \\
		& \leq & c_{1}|x - 0|^{\alpha_0} +c_{2}| t-0|^{\frac{\theta\alpha_0}{\bar{p}}} \\
		& \leq & c_{1}\lambda^{\alpha_0} +c_{2}\lambda^{\frac{\theta\alpha_0}{p}} \\
		& \leq & c_{1}\lambda^{\frac{\alpha_0(p-1)}{p+m-2}} +c_{2}\lambda^{\frac{2\alpha_0(p-1)}{p(p+m-2)}} \\
		& \leq & \max\{c_1, c_2\}\lambda^{\frac{\alpha_0(p-1)}{p+m-2}}.
	\end{array}
\end{equation}
Therefore, we use the $(m, p)$-Approximation Lemma \ref{l2.1}  with the estimate above to obtain
\begin{equation}\label{EQ_Est-alpha-Hold-2}
\begin{array}{rcl}
				\sup\limits_{Q^{-}_{\lambda}}\lvert u \rvert & \le & \sup\limits_{Q^{-}_{\frac{1}{2}}} \vert u-h \rvert + \sup\limits_{Q^{-}_{\lambda}}\lvert h(x,t)-h(0,0)\rvert + \lvert u(0,0)-h(0,0)\rvert + \lvert u(0,0) \rvert\\
				& \le & 2\delta + \mathrm{C}\lambda^{\frac{\alpha_0(p-1)}{p+m-2}}+\frac{\lambda^{\alpha}}{4}.
\end{array}
\end{equation}
Finally, we take
\[
\lambda \in \left(0, \,\min\left\{\frac{1}{4}, \left( \frac{1}{4\mathrm{C}}\right)^{\frac{p+m-2}{\alpha_0(p-1)-\alpha(p+m-2)}}\right\}\right] \quad \text{and} \quad \delta \in \left(0, \frac{\lambda^{\alpha}}{4}\right] \quad  \text{if} \quad \max\left\{1, \frac{2n}{n+2}\right\}<p\leq 2,
\]
and we plug it into \eqref{EQ_Est-alpha-Hold-2} to get the desired estimate.

\bigskip
	
\noindent\textit{Case 2:} Now, we consider the case $p+m-2\leq 1$. First, we use $A$\ref{assump_alpha} combined with \eqref{Intr_Scalling} to get
\[
p\leq\theta\leq 1+\frac{p-1}{p+m-2}.
\]
Here, we suppose $m>1$ and use the fact that $p \leq \theta$ to estimate
\[
\begin{array}{rcl}
	|h(x,t) -h(0,0)| & \le & |h(x,t)-h(0,t)| + | h(0,t)-h(0,0)| \\
	& \leq & c_{1}|x - 0|^{\alpha_0} +c_{2}|t-0 |^{\frac{\alpha_0\theta}{\bar{p}}} \\
	& \leq & c_{1}\lambda^{\alpha_0}+c_{2}\lambda^{\frac{\alpha_0\theta}{p}} \\
	& \leq & \max\{c_1, c_2\}\lambda^{\alpha_0}.
\end{array}
\]
For the case $m>1$, we obtain the following estimate by applying the $(m, p)$-Approximation Lemma \ref{l2.1}.
\begin{equation}\label{EQ_Est-alpha-Hold-8}
			\begin{array}{rcl}
				\sup\limits_{Q^{-}_{\lambda}}|u|& \le & \sup\limits_{Q^{-}_{\frac{1}{2}}} |u-h| + \sup\limits_{Q^{-}_{\lambda}}|h(x,t)-h(0,0)| + |u(0,0)-h(0,0)| + | u(0,0) |\\
				& \le & 2\delta + \mathrm{C}\lambda^{\alpha_0}+\frac{\lambda^{\alpha}}{4}.
			\end{array}
\end{equation}
Next, we can take in \eqref{EQ_Est-alpha-Hold-8}
\[
\lambda \in \left(0, \,\min\left\{\frac{1}{4}, \left( \frac{1}{4\mathrm{C}}\right)^{\frac{1}{\alpha_0-\alpha}}\right\}\right] \quad \text{and} \quad \delta \in \left(0, \frac{\lambda^{\alpha}}{4}\right] \quad \text{for}  \quad m>1
\]
to obtain the desired estimate.

On the other hand, if $m=1$, we have $\theta=p(1-\alpha)+2\alpha\leq2$. Hence,
\[
\begin{array}{rcl}
	|h(x,t) -h(0,0)| & \le & |h(x,t)-h(0,t)| + | h(0,t)-h(0,0)| \\
	& \leq & c_{1}|x - 0| +c_{2}|t-0|^{\frac{\theta}{\bar{p}}} \\
	& \leq & c_{1}\lambda+c_{2}\lambda^{\frac{\theta}{2}} \\
	& \leq & \max\{c_1, c_2\}\lambda^{\frac{\theta}{2}}.
\end{array}
\]

Now, for $m=1$ we have
\begin{equation}\label{EQ_Est-alpha-Hold-10}
\begin{array}{rcl}
\sup\limits_{Q^{-}_{\lambda}}\lvert u \rvert & \le & \sup\limits_{Q^{-}_{\frac{1}{2}}} \vert u-h \rvert + \sup\limits_{Q^{-}_{\lambda}}\lvert h(x,t)-h(0,0)\rvert + \lvert u(0,0)-h(0,0)\rvert + \lvert u(0,0) \rvert\\
& \le & 2\delta + \mathrm{C}\lambda^{\frac{\theta}{2}}+\frac{\lambda^{\alpha}}{4}.
\end{array}
\end{equation}

Finally, if $m=1$, we take in \eqref{EQ_Est-alpha-Hold-10}
\[
\lambda \in \left(0, \,\min\left\{\frac{1}{4}, \left( \frac{1}{4\mathrm{C}}\right)^{\frac{2}{\theta-2\alpha}}\right\}\right] \quad \text{and} \quad \delta \in \left(0, \frac{\lambda^{\alpha}}{4}\right].
\]
This finishes the proof.
\end{proof}

In the sequel, we iterate the previous result in parabolic $\lambda$-adic cylinders.

\begin{proposition}\label{induction} Let $u \in C_{\loc}(-1, 0; L^2_{\loc}(B_1))$ be a weak solution to \eqref{1.1} in $Q^{-}_{1}$, with $\| u\|_{L^{\infty}\left(Q^{-}_{1}\right)} \leq 1$.
Suppose further that
\[
\|f\|_{L^{q,r}(Q^{-}_{1})} < \varepsilon,
\]
where $\varepsilon$ comes from the Proposition \ref{1stStepInduc}. Then,
\begin{equation}\label{Induc-k-Step}
  |u(0,0)|\leq \frac{\lambda^{\alpha k}}{4} \qquad \mbox{for each} \quad k \in \mathbb{N} \quad \mbox{implies} \quad \|u\|_{L^{\infty}\left(Q^{-}_{\lambda^{k}}\right)}\leq \lambda^{\alpha k}.
\end{equation}
\end{proposition}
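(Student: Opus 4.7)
The plan is a straightforward induction on $k \in \mathbb{N}$: the base case $k = 1$ is Proposition \ref{1stStepInduc}, and the inductive step will be reduced to another application of that result by performing a suitable $\lambda$-adic intrinsic rescaling.

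Fix $k \ge 1$ and suppose that \eqref{Induc-k-Step} has been established at step $k$. To pass to $k+1$, I consider $u$ with $|u(0,0)| \le \lambda^{\alpha(k+1)}/4$. Since $0 < \lambda \le 1/4$ and $\alpha > 0$, this forces $|u(0,0)| \le \lambda^{\alpha k}/4$ as well, so the inductive hypothesis yields $\|u\|_{L^\infty(Q^-_{\lambda^k})} \le \lambda^{\alpha k}$. I then introduce the rescaled profile
\[
u_k(y, s) \defeq \lambda^{-\alpha k}\, u\bigl(\lambda^{k} y,\, \lambda^{\theta k} s\bigr), \qquad (y, s) \in Q_1^-,
\]
which manifestly satisfies $\|u_k\|_{L^\infty(Q_1^-)} \le 1$ and $|u_k(0,0)| \le \lambda^\alpha/4$.

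The next step is to verify that $u_k$ is itself a weak solution of the $(m,p)$-Laplacian equation with a controllable source term. A chain-rule computation gives
\[
\partial_s u_k - \Div_y\bigl(m|u_k|^{m-1}|\nabla u_k|^{p-2}\nabla u_k\bigr) = \tilde f_k(y,s) \defeq \lambda^{(\theta - \alpha)k}\, f(\lambda^k y, \lambda^{\theta k} s) \quad \text{in } Q_1^-,
\]
where the fact that both the time-derivative and the divergence term carry the same power of $\lambda$ is precisely what the choice $\theta = p - \alpha(m+p-3)$ from \eqref{Int-Scal} is designed to enforce. A change of variables $x = \lambda^k y$, $t = \lambda^{\theta k} s$ then delivers
\[
\|\tilde f_k\|_{L^{q,r}(Q_1^-)} \le \lambda^{k\eta}\, \|f\|_{L^{q,r}(Q_1^-)}, \qquad \eta \defeq (\theta - \alpha) - \frac{n}{q} - \frac{\theta}{r}.
\]

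I expect the main obstacle to be showing $\eta \ge 0$ from the universal parameters. Setting $D \defeq (r-1)(m+p-2)+1$ and substituting $\theta = p - \alpha(m+p-3)$, after clearing denominators the identity reduces to
\[
r\eta = (pq-n)\frac{r}{q} - p - \alpha D,
\]
and the cap $\alpha \le \frac{(pq-n)r - pq}{qD}$ built into the second entry of \eqref{Sharp_alpha} is exactly the rearrangement $\alpha D \le (pq-n)r/q - p$, which yields $\eta \ge 0$. Consequently $\|\tilde f_k\|_{L^{q,r}(Q_1^-)} \le \|f\|_{L^{q,r}(Q_1^-)} < \varepsilon$, and applying Proposition \ref{1stStepInduc} to $u_k$ produces $\|u_k\|_{L^\infty(Q_\lambda^-)} \le \lambda^\alpha$. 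Unwinding the rescaling, this becomes $\|u\|_{L^\infty(Q^-_{\lambda^{k+1}})} \le \lambda^{\alpha(k+1)}$, closing the induction. The delicate point throughout is reconciling the intrinsic time-scale $\theta$, the compatibility condition A\ref{assump_w-cc}, and the sharp exponent cap \eqref{Sharp_alpha} into the single inequality $\eta \ge 0$; once this identity is in hand, the remaining steps are routine.
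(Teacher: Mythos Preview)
Your proof is correct and follows essentially the same route as the paper: both argue by induction, perform the identical $\lambda$-adic intrinsic rescaling $v_k(x,t)=\lambda^{-\alpha k}u(\lambda^k x,\lambda^{\theta k}t)$, verify that the rescaled source term has $L^{q,r}$-norm controlled by $\lambda^{k\eta}\|f\|_{L^{q,r}}$ with $\eta\ge 0$ precisely because of the cap $\alpha\le \frac{(pq-n)r-pq}{q[(r-1)(m+p-2)+1]}$ in \eqref{Sharp_alpha}, and then reapply Proposition~\ref{1stStepInduc}. Your presentation is slightly more streamlined in that you observe directly that the rescaled equation is again the $(m,p)$-Laplacian (the paper writes the operator as $\mathcal{A}_k$ but this simplifies to $\mathcal{A}$), and your algebraic verification of $\eta\ge 0$ via $r\eta=(pq-n)r/q-p-\alpha D$ is clean and correct.
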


\begin{proof} We resort to an induction argument. It follows from Proposition \ref{1stStepInduc} that the case $k=1$ is true. Now, we suppose that the statement have been proved for $j = 1, 2, \ldots, k$. Next, we address the case $j = k+1$. Suppose $|u(0,0)| \leq \lambda^{(k+1)\alpha}/4$ holds. We define the auxiliary function $v_k: Q^{-}_{1}\to \mathbb{R}$ by
\[
v_k(x,t) \defeq \frac{u(\lambda^{k}x,\lambda^{k\theta}t)}{\lambda^{k\alpha}}.
\]
Notice that $v_k$ is a weak solution of
\[
\frac{\partial v_k}{\partial t}-\Div(\mathcal{A}_k(v_k,\nabla v_k))=f_k(x,t) \quad \text{in}  \quad  Q^{-}_{1},
\]
where
\[
\left\{
\begin{array}{rcl}
  \mathcal{A}_k(s, \xi) & \defeq  & \lambda^{-k\alpha(m+p-2)+k(p-1)}\mathcal{A}(\lambda^{k\alpha}s,\lambda^{(\alpha-1)k}\xi) \\
  \mathcal{A}(s, \xi) & \defeq  & m|s|^{m-1}|\xi|^{p-2}\xi\\
  f_k(x, t) & \defeq & \lambda^{-k\alpha(m+p-2)+k(p-1)+k}f(\lambda^{k}x,\lambda^{k\theta}t).
\end{array}
\right.
\]
Hence, by using the induction hypothesis, we can guarantee that
\[
\|v_k\|_{L^{\infty}\left(Q^{-}_{1}\right)} \leq 1 \quad \mbox{and} \quad |v_k(0,0)|=\frac{|u(0,0)|}{\lambda^{k\alpha}}\leq \frac{\lambda^{(k+1)\alpha}}{4\lambda^{k\alpha}}=\frac{\lambda^{\alpha}}{4}.
\]

Moreover, $f_k$ satisfies
\[
\begin{array}{rcl}
  \| f_k \|_{L^{q,r}(Q_{1}^{-})}^{r} & = & \displaystyle  \int\limits_{-1}^{0}\Bigg(\int\limits_{B_{1}}\lvert f_k(x,t)\rvert^{q} dx\Bigg)^{\frac{r}{q}}dt\\
   & = & \displaystyle \int\limits_{-1}^{0}\Bigg(\int\limits_{B_{1}}\lambda^{\left(-k\alpha(m+p-2)+k(p-1)+k\right)q}\lvert f(\lambda^{k}x,\lambda^{k\theta} t)\rvert^{q}dx\Bigg)^{\frac{r}{q}}dt\\
   & = & \displaystyle \int\limits_{-1}^{0}\Bigg(\int\limits_{B_{\lambda^{k}}}\lambda^{\left[\left(-k\alpha(m+p-2)+k(p-1)+k\right)q-nk\right]}\lvert f(z,\lambda^{k\theta} t)\rvert^{q}dz\Bigg)^{\frac{r}{q}} dt\\
   & = & \displaystyle \lambda^{\left[\left(-k\alpha(m+p-2)+k(p-1)+k\right)q-nk\right]\frac{r}{q}} \lambda^{-k\theta}\int\limits_{-\lambda^{k\theta}}^{0}\Bigg(\int\limits_{B_{\lambda^{k}}}|f(z,\tau)|^{q} dz\Bigg)^{\frac{r}{q}}d\tau.
\end{array}
\]
Thus,
\[
\|f_k\|_{L^{q,r}(Q^{-}_{1})}^{r} =  \lambda^{\left[\left(-k\alpha(m+p-2)+k(p-1)+k\right)q-nk\right]\frac{r}{q}-k\theta}\|f\|_{L^{q,r}(Q^{-}_{\lambda^{k}})}^{r}.
\]
Furthermore, we obtain that
{\small{
\[
\left[\left(-k\alpha(m+p-2)+k(p-1)+k\right)q-nk\right]\frac{r}{q}-k\theta \geq 0 \iff \alpha \leq \frac{r(pq-n)-pq}{q[(m+p-2)r-(m+p-3)]}.
\]}}

Now, since $\lambda \in \left(0, \frac{1}{4}\right]$, we can conclude
\[
\|f_k\|_{L^{q,r}(Q^{-}_{1})}\leq \|f\|_{L^{q,r}(Q^{-}_{\lambda^{k}})} \leq \|f\|_{L^{q,r}(Q^{-}_{1})}\leq \varepsilon,
\]
which implies that $v_k$ satisfies the hypothesis of Proposition \ref{1stStepInduc}. Therefore
\[
 \|v_k\|_{L^{\infty}\left(Q^{-}_{\lambda}\right)} \leq \lambda^{\alpha},
\]
by rescaling back to the unitary setting, we get
\[
\|u\|_{L^{\infty}\left(Q^{-}_{\lambda^{(k+1)}}\right)}\leq \lambda^{\alpha (k+1)},
\]
and complete the proof.
\end{proof}

\section{Proof of the Theorem \ref{t1.2}}

In the next result, we control the oscillation of $u$ in the intrinsic cylinders with continuous radii.

\begin{proposition}\label{rho} Let $u \in C_{\loc}(-1, 0; L^2_{\loc}(B_1))$ be a bounded weak solution of \eqref{1.1} in $Q^{-}_{1}$ and $\lambda>0$ as in Proposition \ref{1stStepInduc}. Then, for $\rho \in (0, \lambda)$ and $\mathrm{C}>0$ a universal constant, we have
\[
|u(0,0)|\leq \frac{\rho^{\alpha}}{4} \qquad \text{implies} \qquad \lvert \lvert u \rvert \rvert_{L^{\infty}\left(Q^{-}_{\rho}\right)}\leq \mathrm{C}\rho^{\alpha}.
\]
\end{proposition}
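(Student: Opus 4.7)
The plan is to interpolate between the discrete $\lambda$-adic scales on which Proposition \ref{induction} already delivers the desired decay. First, I would reduce to the normalized regime $\|u\|_{L^{\infty}(Q^{-}_{1})}\le 1$ and $\|f\|_{L^{q,r}(Q^{-}_{1})}<\varepsilon$ by invoking the rescaling $v_{\rho}(x,t)=\rho^{\ell}u(\rho^{\ell}x,\rho^{\tau}t)$ from Remark \ref{r2.1}, which absorbs $\|u\|_{L^{\infty}}$ and $\|f\|_{L^{q,r}}$ into the final constant, so that Proposition \ref{induction} is applicable.

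Next, given $\rho\in(0,\lambda)$ satisfying $|u(0,0)|\le \rho^{\alpha}/4$, I would pick the unique integer $k\ge 1$ with
\[
\lambda^{k+1}<\rho\le \lambda^{k}.
\]
Because $\alpha>0$ and $\rho\le\lambda^{k}$, we have $\rho^{\alpha}\le\lambda^{k\alpha}$, hence
\[
|u(0,0)|\le \frac{\rho^{\alpha}}{4}\le \frac{\lambda^{k\alpha}}{4},
\]
which is exactly the hypothesis of Proposition \ref{induction} at step $k$. Applying that proposition yields $\|u\|_{L^{\infty}(Q^{-}_{\lambda^{k}})}\le\lambda^{k\alpha}$.

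Finally, since $\rho\le\lambda^{k}$ and $\theta>0$ imply $\rho^{\theta}\le\lambda^{k\theta}$, the inclusion $Q^{-}_{\rho}\subset Q^{-}_{\lambda^{k}}$ holds, so
\[
\|u\|_{L^{\infty}(Q^{-}_{\rho})}\le \|u\|_{L^{\infty}(Q^{-}_{\lambda^{k}})}\le \lambda^{k\alpha}=\lambda^{-\alpha}\,\lambda^{(k+1)\alpha}<\lambda^{-\alpha}\rho^{\alpha},
\]
and we conclude by setting $\mathrm{C}:=\lambda^{-\alpha}$, a universal constant depending only on $m,n,p,\alpha$ (multiplied by whatever factor is generated by the Remark \ref{r2.1} rescaling in the unnormalized case). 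The step I expect to require the most care is bookkeeping the rescaling in Remark \ref{r2.1}: one has to verify that the intrinsic scaling factor $\theta$ is compatible with both the spatial dilation $\rho^{\ell}$ and the temporal dilation $\rho^{\tau}$, so that the sandwich $\lambda^{k+1}<\rho\le\lambda^{k}$ translates correctly into the inclusion of intrinsic cylinders after one undoes the normalization. The rest of the argument is a purely geometric interpolation between consecutive $\lambda$-adic radii.
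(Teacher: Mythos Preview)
Your proposal is correct and follows essentially the same approach as the paper's own proof: normalize via Remark~\ref{r2.1}, locate $\rho$ in a dyadic shell $\lambda^{k+1}<\rho\le\lambda^{k}$, invoke Proposition~\ref{induction} at level $k$, and interpolate to obtain $\|u\|_{L^{\infty}(Q^{-}_{\rho})}\le\lambda^{-\alpha}\rho^{\alpha}$. The paper's argument is identical in structure and constant, so no further comparison is needed.
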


\begin{proof}

Recall that from Remark \ref{r2.1}, we can suppose
\[
\|u\|_{L^{\infty}(Q^{-}_{1})} \leq 1 \quad  \mbox{and} \quad \|f\|_{L^{q,r}(Q^{-}_1)} \leq\varepsilon.
\]
Now, given $\rho \in (0, \lambda)$, and let $k \in \mathbb{N}$ be such that
\[
\lambda^{k+1}<\rho \leq \lambda^{k},
\]
which implies that
\[
|u(0,0)|\leq \frac{\rho^{\alpha}}{4}\leq \frac{\lambda^{k\alpha}}{4}.
\]
For this reason, we can apply the Proposition \ref{induction}  to obtain that
\[
\|u\|_{L^{\infty}\left(Q^{-}_{\lambda^{k}}\right)} \leq \lambda^{k\alpha}.
\]

Therefore, we can conclude that
\[
\|u\|_{L^{\infty}(Q^{-}_{\rho})}\leq \|u\|_{L^{\infty}\left(Q^{-}_{\lambda^{k}}\right)} \leq \lambda^{k\alpha} \leq \Big(\frac{\rho}{\lambda} \Big)^{\alpha} = \mathrm{C}\rho^{\alpha}.
\]
\end{proof}

In what follows, we provide the proof of the Theorem \ref{t1.2}.

\begin{proof}[{\it Proof of Theorem \ref{t1.2}}]

First, let us recall that to proof the statement of the Theorem  \ref{t1.2}, it is enough to find a constant $\mathrm{M}_0>0$ such that
\begin{equation}\label{MainEstThm}
  \|u-u(0,0)\|_{L^{\infty}\left(Q^{-}_{\rho}\right)} \leq \mathrm{M}_0\rho^{\alpha}.
\end{equation}
Define the quantity
\begin{equation}\label{Def-mu}
\mu \defeq (4|u(0,0)|)^{1/\alpha}\geq 0,
\end{equation}
and consider $\rho \in (0, \lambda)$. We split the proof into three cases:\\
\noindent{\it Case 1:} $\rho \in [\mu, \lambda)$: Notice that in this case
\[
   |u(0,0)|= \frac{\mu^{\alpha}}{4} \leq \frac{\rho^{\alpha}}{4}.
\]
Hence, we can apply Proposition \ref{rho} to obtain
\begin{equation}\label{FirstEstThm}
  \displaystyle  \sup\limits_{Q^{-}_{\rho}} |u(x,t)-u(0,0)|\leq \mathrm{C}\rho^{\alpha} + |u(0,0)| \leq \Big(\mathrm{C}+\frac{1}{4} \Big)\rho^{\alpha}.
\end{equation}

\bigskip

\noindent{\it Case 2:} $\rho \in (0, \mu)$: We introduce the auxiliary function $w: Q^{-}_{1} \to \mathbb{R}$ given by
\[
w(x,t) \defeq \frac{u(\mu x, \mu^{\theta}t)}{\mu^{\alpha}}.
\]
Observe that, we have $|w(0, 0)| = \frac{1}{4}$. Moreover, $w$ solves
\[
\frac{\partial w}{\partial t}-\Div(\mathcal{A}_{\mu}(w,\nabla w))= f_{\mu}(x,t) \qquad \text{in} \qquad Q^{-}_{1},
\]
where
\[
\left\{
\begin{array}{rcl}
  \mathcal{A}_{\mu}(s, \varsigma) & \defeq & \mu^{-[\alpha(m-1)+(\alpha-1)(p-1)]}\mathcal{A}(\mu^{\alpha}s, \mu^{\alpha -1}\varsigma) \\
  \mathcal{A}(s, \varsigma) & \defeq & m|s|^{m-1}|\varsigma|^{p-2}\varsigma\\
  f_{\mu}(x, t) & \defeq & \mu^{-[\alpha(m-1)+(\alpha-1)(p-1)]+1}f(\mu x, \mu^{\theta} t).
\end{array}
\right.
\]

Applying Proposition \ref{rho} again, we obtain (recall that $|u(0,0)|=\frac{\mu^{\alpha}}{4}$)
\begin{equation}
\|w\|_{L^{\infty}(Q^{-}_{1})} = \frac{1}{\mu^{\alpha}}\|u\|_{L^{\infty}(Q^{-}_{\mu})}\leq \frac{\mathrm{C}\mu^{\alpha}}{\mu^{\alpha}}= \mathrm{C},
\end{equation}
Thanks to the uniform estimate above, and Theorem \ref{ThmHolderEst}, we can guarantee  the existence of a radius $\rho_{0}>0$, in which
\[
|w(x,t)| \geq \frac{1}{8} \quad \text{for all} \quad (x,t) \in \overline{Q^{-}_{\rho_{0}}}\subset Q^{-}_{\frac{1}{4}} .
\]
In fact, we use the H\"{o}lder estimates provided by  Theorem \ref{ThmHolderEst} to get
\begin{equation}
\begin{array}{rcl}\label{estimate0_Prop5.1}
  \frac{1}{4} & = & |w(0, 0)| \\
   & \le & |w(x_{\mathrm{Min}}, t_{\mathrm{Min}})-w(0, 0)|+|w(x_{\mathrm{Min}}, t_{\mathrm{Min}})| \\
   & \le & \gamma  \left(\frac{|x_{\mathrm{Min}}|^{\alpha_0}+ |t_{\mathrm{Min}}|^{\frac{\alpha_0}{\bar{p}}}}{(m,p)-\dist(\overline{Q^{-}_{\rho_{0}}}, \partial_p Q_1^{-})^{\alpha_0}} \right) +|w(x_{\mathrm{Min}}, t_{\mathrm{Min}})|\\
   & \le & \gamma  \left(\frac{4}{3}\right)^{\alpha_0}\left(\rho_0^{\alpha_0}+ \rho_0^{\frac{\alpha_0\theta}{\bar{p}}} \right) + \frac{1}{8}
\end{array}
\end{equation}
where $(x_{\mathrm{Min}}, t_{\mathrm{Min}}) \in \overline{Q^{-}_{\rho_{0}}}$ is a minimum point of $w$ and $(m,p)-\dist \left(\overline{Q^{-}_{\rho_{0}}}, \partial_p Q_1^{-}\right) \geq \frac{3}{4}$.

To finish this case, we need to analyze the scenarios $m+p-3 \geq 0$ and $m+p-3 < 0$. For the sake of clarity, we divide the proof into two subcases.
\bigskip

{\it Subcase 1.} Let us assume $m+p-3 \geq 0$, we have $\theta \leq p \leq 2$, which implies $\frac{\theta}{\bar{p}} \leq \frac{p}{\bar{p}} \leq 1$. Hence, we can conclude that
\begin{equation}\label{estimate_Prop5.1}
\frac{\alpha_0 \theta}{\bar{p}} \leq \alpha_0.
\end{equation}

We use \eqref{estimate_Prop5.1} combined with  \eqref{estimate0_Prop5.1} to obtain
\[
\frac{1}{4} \leq \gamma \left(2\left(\frac{4}{3}\right)^{\alpha_0} {\rho_0}^{\frac{\alpha_0 \theta}{\bar{p}}}\right) + \frac{1}{8}.
\]

Therefore,
\begin{equation}\label{Eq_rho_1}
  \rho_0 \geq \left( \frac{1}{16 \gamma} \left(\frac{3}{4}\right)^{\alpha_0}\right)^{\frac{\bar{p}}{\alpha_0 \theta}}.
\end{equation}

\bigskip
{\it Subcase 2.} On the other hand, suppose $m+p-3 < 0$ holds. Here, we also examine the cases $m>1$ and $m=1$. First, let us consider $m>1$, then
\[
p \leq \theta \leq 1+ \frac{p-1}{p+m-2} < 2.
\]

Thus, $\frac{p}{\bar{p}} \leq \frac{\theta}{p} \leq \frac{2}{p}$. Hence, it follows from the definition \eqref{p(m)} that $\frac{\theta}{\bar{p}} \geq 1$, {\it i.e.,}
\begin{equation} \label{estimate1_Prop5.1}
\frac{\alpha_0 \theta}{\bar{p}} \geq \alpha_0.
\end{equation}

We use the estimate \eqref{estimate1_Prop5.1} in \eqref{estimate0_Prop5.1} to get that
\[
\frac{1}{4} \leq \gamma \left(2\left(\frac{4}{3}\right)^{\alpha_0} {\rho_0}^{\alpha_0 }\right) + \frac{1}{8}.
\]

Thus,
\begin{equation}\label{Eq_rho_2}
\rho_0 \geq \left( \frac{1}{16 \gamma} \left(\frac{3}{4}\right)^{\alpha_0}\right)^{\frac{1}{\alpha_0}}.
\end{equation}

Now, we suppose that $m=1$, then $\bar{p} =2$. Hence, $p \leq \theta \leq 2$ which implies, $\frac{p}{\bar{p}} \leq \frac{\theta}{\bar{p}} \leq 1$. Hence $\frac{\alpha_0 \theta}{\bar{p}} \leq \alpha_0$. 	Similarly to \eqref{estimate_Prop5.1}, we obtain that
\begin{equation}\label{Eq_rho_3}
\rho_0 \geq \left( \frac{1}{16 \gamma} \left(\frac{3}{4}\right)^{\alpha_0}\right)^{\frac{2}{\alpha_0 \theta}}.
\end{equation}

Therefore, $w$ satisfies the
$$
\frac{\partial w}{\partial t}-\Div(\mathfrak{A}_0(x,t, w)|\nabla w|^{p-2}\nabla w))= f(x, t) \quad \text{in} \quad  Q^{-}_{\rho_{0}},
$$
where $f \in L^{q, r}(Q^{-}_1)$ and $(x, t) \mapsto \mathfrak{A}_0(x,t, w)$ is a continuous function satisfying
\[
0<m\left(\frac{1}{8}\right)^{m-1}\le \mathfrak{A}_0(x,t, w) \leq m \lvert \lvert w \rvert \rvert_{L^{\infty}(Q^{-}_{1})}^{m-1}\le m\mathrm{C}^{m-1}< \infty.
\]
Hence, we can think of the equation above as an evolution $p$-Laplacian problem as in \cite{TU14} and \cite{TU21}.

Therefore, it follows from Theorem \ref{ThmSharp-p-Laplace} that $w \in C^{\hat{\alpha}, \frac{\hat{\alpha}}{\hat{\theta}}}(Q^{-}_{\rho_0})$ with
\[
\hat{\alpha} = \frac{(pq-n)r-pq}{q[(p-1)r-(p-2)]} \geq \frac{(pq-n)r-pq}{q[(r-1)(m+p-2)+1]},
\]
and
\[
\|w -w(0,0)\|_{L^{\infty}(Q^{-}_{\rho})}\leq \mathrm{K}_0\rho^{\hat{\alpha}} \quad \forall  \,\, 0<\rho<\frac{\rho_0}{2},
\]
with $\mathrm{K}_0>0$ a universal constant. Moreover, notice that
\[
\begin{array}{rcl}
  \hat{\alpha} & \geq & \min\left\{\alpha^{-}_0\max\left\{\frac{1}{\alpha_0(m+p-3)+1}, 1\right\}, \frac{(pq-n)r-pq}{q[(p-1)r-(p-2)]}\right\} \vspace{0.2cm}\\
   & \ge & \min\left\{\alpha^{-}_0\max\left\{\frac{1}{\alpha_0(m+p-3)+1}, 1\right\}, \frac{(pq-n)r-pq}{q[(r-1)(m+p-2)+1]}\right\} \\
   & = & \alpha.
\end{array}
\]

Hence, since  $\alpha \le \hat{\alpha}$, we get that
\[
\sup_{Q^-_\rho}|w(x,t) - w(0,0)| = \|w -w(0,0)\|_{L^{\infty}(Q^{-}_{\rho})} \leq \mathrm{K}_0\rho^\alpha,
\]
which implies
\begin{align*}
\sup\limits_{(x,t)\in Q^{-}_{\mu\rho}}|u(x,t)-u(0,0)| & =  \sup\limits_{(x,t)\in Q^{-}_{\rho}}|u(\mu x,\mu^\theta t)-u(0,0)| \\
& = \mu^\alpha\sup\limits_{(x,t)\in Q^{-}_{\rho}}|w(x,t)-w(0,0)| \\
& \leq \mathrm{K}_0(\mu\rho)^\alpha,
\end{align*}
for any $0<\mu\rho<\mu\frac{\rho_{0}}{2}$. Therefore
\begin{equation}\label{SecondEstThm}
\sup\limits_{(x,t)\in Q^{-}_{\rho}} \lvert u(x,t)-u(0,0) \rvert \leq \mathrm{K}_0\rho^{\alpha}, \quad  \text{for any} \quad 0<\rho<\mu\frac{\rho_{0}}{2}.
\end{equation}

\bigskip

\noindent{\it Case 3:} $\rho \in \left[\mu\frac{\rho_0}{2}, \mu\right)$: Finally, in this last case we obtain from Case 1, \eqref{Eq_rho_1}, \eqref{Eq_rho_2} and \eqref{Eq_rho_3}

{\small{
\begin{equation}\label{LastEstThm}
  \begin{array}{rcl}
\displaystyle  \sup\limits_{(x,t)\in Q^{-}_{\rho}}|u(x,t)-u(0,0)| & \le &  \displaystyle \sup\limits_{(x,t)\in Q_{\mu}} | u(x,t)-u(0,0)| \\
   & \le &  \left(\mathrm{C}+\frac{1}{4}\right)\mu^{\alpha} \\
   & \le &  \left(\mathrm{C}+\frac{1}{4}\right)\Big(\frac{2\rho}{\rho_{0}}\Big)^{\alpha} \\
   & \le & \left(\mathrm{C}+\frac{1}{4}\right)2^{\alpha} \max\left\{\left[16\gamma\left(\frac{4}{3}\right)^{\alpha_0}\right]^{\frac{\bar{p}\alpha}{\alpha_0\theta}}, \left[16\gamma\left(\frac{4}{3}\right)^{\alpha_0}\right]^{\frac{\alpha}{\alpha_0}}, \left[16\gamma\left(\frac{4}{3}\right)^{\alpha_0}\right]^{\frac{2\alpha}{\alpha_0\theta}}\right\}\rho^{\alpha}.
\end{array}
\end{equation}
}}

Now, by setting
{\small{
\[
\begin{array}{rcl}
  \mathrm{M}_0 & \defeq  & \max\left\{\mathrm{C} + \frac{1}{4}, \,\mathrm{K}_0, \, \left(\mathrm{C}+\frac{1}{4}\right)2^{\alpha} \max\left\{\left[16\gamma\left(\frac{4}{3}\right)^{\alpha_0}\right]^{\frac{\bar{p}\alpha}{\alpha_0\theta}}, \left[16\gamma\left(\frac{4}{3}\right)^{\alpha_0}\right]^{\frac{\alpha}{\alpha_0}}, \left[16\gamma\left(\frac{4}{3}\right)^{\alpha_0}\right]^{\frac{2\alpha}{\alpha_0\theta}}\right\}\right\} \\
   & = & \max\left\{\mathrm{K}_0, \, \left(\mathrm{C}+\frac{1}{4}\right)2^{\alpha} \max\left\{\left[16\gamma\left(\frac{4}{3}\right)^{\alpha_0}\right]^{\frac{\bar{p}\alpha}{\alpha_0\theta}}, \left[16\gamma\left(\frac{4}{3}\right)^{\alpha_0}\right]^{\frac{\alpha}{\alpha_0}}, \left[16\gamma\left(\frac{4}{3}\right)^{\alpha_0}\right]^{\frac{2\alpha}{\alpha_0\theta}}\right\}\right\}
\end{array}
\]
}}
we get from \eqref{FirstEstThm}, \eqref{SecondEstThm} and \eqref{LastEstThm}, the estimate \eqref{MainEstThm} for any $\rho \in (0, \lambda)$.

From a standard covering argument, we obtain
\[
[u]_{C^{\alpha, \frac{\alpha}{\theta}}\left(Q^{-}_{\frac{1}{2}}\right)} \leq \mathrm{M}_0.
\]
This finishes the proof.
\end{proof}

\section{Application to sublinear Trudinger type equations}

In this section, we apply our results to find the sharp regularity of non-negative bounded solutions for doubly nonlinear equations of the form

\begin{equation}\label{EqDNEq}
\frac{\partial (u^{\mathrm{k}})}{\partial t}-\Div(|\nabla u|^{p-2}\nabla u) = f(x, t) \,\,\, \text{in} \,\,\, Q_1^-,
\end{equation}
where
\[
\max\left\{1, \dfrac{2n}{n+2}\right\} < p \le 2 \quad \text{and} \quad  \mathrm{k} \in (0, 1).
\]
Furthermore, for the case where $p \in (2, \infty)$, see \cite[Section 6]{BJdaSR22}.

This class of equations is known as Trudinger type equations, which model turbulent filtration of non-Newtonian fluids through a porous media. One can observe that such equations have the feature of being singular in the time variable, since $u^{\mathrm{k}-1}$ blows up over the set $\{u=0\}$, and it is degenerate/singular in space, since the modulus of ellipticity, {\it i.e.,} $|\nabla u|^{p-2}$ blows up at those points where $\{|\nabla u|=0\}$. For a robust manuscript on this topic, we refer the reader to \cite{DT94} and the references therein.

In order to derive equations of type \eqref{EqDNEq} from our model \eqref{1.1}, we do a change of variables $v = u^{\mathrm{k}}$, and as long as the solutions are strictly away from zero, we obtain  that $v$ solves

\[
   \frac{\partial v}{\partial t}-\Div\left(\frac{1}{\mathrm{k}^{p-1}}v^{m_{\mathrm{k}}-1}\lvert \nabla v \rvert^{p-2}\nabla v\right) = f(x, t) \quad \text{in} \quad Q_1^-,
\]
where $m_{\mathrm{k}} := \frac{(1-\mathrm{k})(p-1)}{\mathrm{k}} +1$. In such scenario
\begin{equation}\label{EqDNEq1}
\max\left\{1, \frac{2n}{\mathrm{k}(2-n)+2n}\right\}<p \le 2\quad \text{and} \quad  \mathrm{k} \in (0, 1).
\end{equation}
Moreover, notice that by \eqref{EqDNEq1} we have $m_k \geq 1$.

Therefore, by applying Theorem \ref{t1.2}, we have that if $u$ is a weak solution to  \eqref{EqDNEq}, then $u$ is locally of class $C^{\alpha_{\mathrm{k}}, \frac{\alpha_{\mathrm{k}}}{\theta_{\mathrm{k}}}}(Q_1^-)$, where
\[
  \alpha_{\mathrm{k}} \defeq \min\left\{\alpha_0\max\left\{\frac{k}{\alpha_0(p-1-\mathrm{k}) +k}, 1 \right\},\, \,\frac{\mathrm{k}[(pq-n)r-pq]}{q[(r-1)(p-1)+\mathrm{k}]}\right\}
\]
and
\[
\theta_{\mathrm{k}} \defeq p -\alpha_{\mathrm{k}}\left(\frac{p-1}{\mathrm{k}}\right)\Big(1-\frac{\mathrm{k}}{p-1}\Big).
\]
In this case, the optimal exponent $\alpha_0>0$ can be found in \cite[Theorem 2.3]{HL13}. In addition, there exists a universal constant $\mathrm{C}_0 > 0$ such that
\[
  \displaystyle  [u]_{C^{\alpha_{\mathrm{k}}, \frac{\alpha_{\mathrm{k}}}{\theta_{\mathrm{k}}}}\left(Q_{\frac{1}{2}}^-\right)} \leq \mathrm{C}_0\left[\|u\|_{L^{\infty}(Q_1^-)} + \|f\|_{L^{q, r}(Q_1^-)}\right].
\]
For results regarding the regularity estimates to homogeneous Trundger's equation, we refer to \cite{DiUrb20, HL13} and \cite{KSU12}.

\section{Estimates for solutions close to evolution $p$-Laplacian}

In the sequel, we argue how weak solutions of
\begin{equation}\label{Eq5.1}
  \frac{\partial u}{\partial t}-\Div(m\lvert u\rvert^{m-1}\lvert \nabla u \rvert^{p-2}\nabla u) = f(x, t) \quad \text{in} \quad \quad Q_1^{-},
\end{equation}
become ``asymptotically Lipschitz continuous'' in the scenario in which our model is close, in a suitable way, to the homogeneous evolutionary $p$-Laplacian equation, {\it i.e.,}
\[
 \frac{\partial u}{\partial t} - \Delta_p u = 0 \qquad\text{in} \quad Q^{-}_1.
\]
Indeed, we follow the same lines as in \cite{Stur17-2}. Consider the parameter
\begin{equation}
\iota\coloneqq \frac{m-1}{p-1} \quad \text{for} \quad m\geq 1 \quad \text{and} \quad \max\left\{1, \frac{2n}{n+2}\right\} <p < \infty,
\end{equation}
and rewrite the equation \eqref{Eq5.1} as
\begin{equation}\label{Eq5.2}
\frac{\partial u}{\partial t}-\phi_0(m,p,\iota)\Div{(\lvert \nabla u^{\iota +1}\rvert^{p-2}\nabla u^{\iota+1})}=f(x,t) \quad \text{in} \quad Q_1^{-},
\end{equation}
where $\phi_0(m,p,\iota)=m\Big(\frac{1}{\iota +1}\Big)^{p-1}.$ Observe that $\phi_0$ does not degenerate when the parameter $m$ goes to $1$. More precisely, we have
\[
  \phi_0(m, p, \iota)\to 1 \quad  \text{as} \quad m\to 1^{+} \quad \text{for any}\,\,\,\, \max\left\{1, \frac{2n}{n+2}\right\} <p < \infty.
\]

Next, we show how weak solutions to \eqref{Eq5.1} are close, in the $L^\infty$-sense, to a $p$-harmonic function, in the case $m \to 1^+$.

\begin{lemma}[{\bf  $p$-Approximation Lemma}]\label{CaloricLemma} Let $u \in C_{\loc}(-1, 0; L^2_{\loc}(B_1))$ a weak solution to \eqref{Eq5.2} with $\|u\|_{L^{\infty}(Q^{-}_{1})}\leq 1$. Given $\delta>0$, there exist $\epsilon>0$, depending only on $n$, $p$ and $\delta$ such that if
\[
|m-1|  + \|f\|_{L^{q,r}(Q^{-}_1)} \leq \epsilon,
\]
then we can find $w$ satisfying
\[
\left\{
\begin{array}{rcrcl}
  \frac{\partial w}{\partial t}- \Delta_p w & = & 0 & \text{in} & Q^{-}_{\frac{1}{2}}\\
  w & = & u & \text{on} & \partial_p Q^{-}_{\frac{1}{2}}
\end{array}
\right.
\]
such that
\[
  \sup\limits_{Q^{-}_{\frac{1}{2}}}\lvert w - u\rvert \leq \delta.
\]
\end{lemma}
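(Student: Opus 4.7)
The plan is a compactness-and-contradiction argument in the spirit of the $(m,p)$-Approximation Lemma \ref{l2.1}. Suppose the statement fails: there exist $\delta_0>0$, sequences $m_j\to 1^+$, $f_j$ with $\|f_j\|_{L^{q,r}(Q_1^-)}\to 0$, and weak solutions $u_j$ of \eqref{Eq5.2} (with parameter $m_j$ and source $f_j$) satisfying $\|u_j\|_{L^\infty(Q_1^-)}\le 1$, while for every $w$ solving $\partial_t w-\Delta_p w=0$ in $Q_{1/2}^-$ with $w=u_j$ on $\partial_p Q_{1/2}^-$ one has $\sup_{Q_{1/2}^-}|u_j-w|>\delta_0$.

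First I would extract a convergent subsequence. The Caccioppoli estimate of Proposition \ref{Cacci_Est}, applied with exponent $m_j$, yields a uniform bound on $\int_{Q_{3/4}^-}|u_j|^{m_j-1}|\nabla u_j|^p\,dx\,dt$; via the identity $|\nabla(u_j^{\iota_j+1})|^p=(\iota_j+1)^p |u_j|^{(m_j-1)p/(p-1)}|\nabla u_j|^p$ already exploited in the proof of Lemma \ref{l2.1}, this translates into a uniform $L^p$ control on $\nabla(u_j^{\iota_j+1})$, where $\iota_j=(m_j-1)/(p-1)$. Combined with the intrinsic Hölder estimate of Theorem \ref{ThmHolderEst}, which remains uniform as $m_j\to 1^+$ in the prescribed range of $p$, the Arzelà–Ascoli theorem provides a subsequence with $u_j\to u_\infty$ uniformly on $Q_{1/2}^-$ and $\nabla(u_j^{\iota_j+1})\rightharpoonup \nabla u_\infty$ weakly in $L^p$.

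Next I would pass to the limit in the weak formulation of \eqref{Eq5.2}. Since $\phi_0(m_j,p,\iota_j)\to 1$, $u_j^{\iota_j+1}\to u_\infty$ uniformly, and $f_j\to 0$ in $L^{q,r}$, a Minty–Browder/Boccardo–Murat monotonicity argument upgrades weak to a.e. convergence of the gradients and identifies the limiting flux as $|\nabla u_\infty|^{p-2}\nabla u_\infty$. Therefore $u_\infty$ is a weak solution of $\partial_t u_\infty-\Delta_p u_\infty=0$ in $Q_{1/2}^-$. Letting $w_j$ be the parabolic $p$-Laplacian Dirichlet solution in $Q_{1/2}^-$ with boundary data $u_j$, continuous dependence on the data, combined with uniqueness for that Dirichlet problem, forces $w_j\to u_\infty$ uniformly on $Q_{1/2}^-$, whence $\|u_j-w_j\|_{L^\infty(Q_{1/2}^-)}\to 0$, contradicting the standing hypothesis. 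The main obstacle is exactly this identification of the nonlinear flux in the singular regime $\max\{1,2n/(n+2)\}<p<2$, where weak convergence of $\nabla(u_j^{\iota_j+1})$ does not pass automatically through the map $\xi\mapsto|\xi|^{p-2}\xi$; monotonicity of the $p$-Laplacian and admissibility of $u_\infty$ as a test function in the limit equation must be exploited to obtain strong/a.e. convergence of gradients.
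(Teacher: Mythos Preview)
Your compactness--contradiction scheme is correct and is precisely the route the paper intends: the authors omit the argument and point to \cite[Lemma 6.1]{BJdaSR22}, whose proof is the natural variant of Lemma \ref{l2.1} with $m_j\to 1^{+}$ replacing a fixed $m$; your outline reproduces that structure, including the Caccioppoli/H\"older compactness and the identification of $u_\infty$ as a $p$-caloric function. Your closing step, using uniqueness plus the comparison principle for the parabolic $p$-Laplacian Dirichlet problem to force $w_j\to u_\infty$, is exactly what is needed because here (unlike in Lemma \ref{l2.1}) the approximating profile is prescribed as the Dirichlet solution with boundary datum $u_j$; you are also right to flag the flux identification as the only delicate point, and the monotonicity/Minty argument you cite is the standard device that the paper (and \cite{BJdaSR22}) leave implicit.
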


\begin{proof} The proof follows the same lines as the one in \cite[Lemma 6.1]{BJdaSR22}. For this reason, we will omit it here.
\end{proof}

Finally, we can show the asymptotic regularity estimates, in the case where we have $m \to 1^+$.

\begin{theorem}\label{Mthm2}\label{ThmAsymReg} Let $u \in C_{\loc}(-1, 0; L^2_{\loc}(B_1))$ be a bounded weak solution of \eqref{Eq5.1} with $f \in L^{q, r}(Q^{-}_1)$. Suppose that A\ref{assump_w-cc} holds true. Given, $\alpha \in(0,1)$, there exists an $\varepsilon_0>0$ such that if $m-1 < \varepsilon_0$, then any solution of \eqref{Eq5.1} belongs to $C^{\alpha, \frac{\alpha}{\theta}}$. Moreover, there exists a universal constant $\mathrm{M}_0>0$ such that
$$
  \displaystyle  [u]_{C^{\alpha, \frac{\alpha}{\theta}}\left(Q^{-}_{\frac{1}{2}}\right)} \leq \mathrm{M}_0\left[\|u\|_{L^{\infty}(Q^{-}_1)} + \|f\|_{L^{q, r}(Q^{-}_1)}\right].
$$
Quantitatively, such estimate states that $u \in C^{1^{-}, {\frac{1}{2}}^{-}}\left(Q^{-}_{\frac{1}{2}}\right)$.
\end{theorem}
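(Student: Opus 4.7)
The proof proceeds along the same geometric tangential scheme as the proof of Theorem \ref{t1.2}, but replacing the $(m,p)$-Approximation Lemma \ref{l2.1} by the $p$-Approximation Lemma \ref{CaloricLemma}: the tangential profile is now a solution of the homogeneous evolution $p$-Laplacian equation. The crucial advantage is that $p$-caloric functions are locally Lipschitz in the space variable (with the matching intrinsic time modulus of order $1/\bar{p}$), so any H\"older exponent $\alpha < 1$ becomes accessible at the tangential level, rather than being capped by $\alpha_0$ as in the $(m,p)$-setting.

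After the preliminary rescaling of Remark \ref{r2.1} (which does not enlarge $m-1$), we may assume $\|u\|_{L^{\infty}(Q_1^-)} \le 1$ and $\|f\|_{L^{q,r}(Q_1^-)} \le \epsilon$, with $\epsilon$ to be chosen small. Fix $\alpha \in (0,1)$. The first geometric step mirrors Proposition \ref{1stStepInduc}: given $\delta > 0$, shrink $\varepsilon_0$ and $\epsilon$ so that Lemma \ref{CaloricLemma} applies, producing a $p$-caloric function $w$ on $Q^-_{1/2}$ with $\sup|u-w|\le\delta$. The homogeneous $p$-Laplacian regularity alluded to after Theorem \ref{ThmSharp-p-Laplace} yields, for every $\lambda\in(0,1/4]$,
\[
\sup_{Q^-_\lambda}|w(x,t)-w(0,0)| \le L\bigl(\lambda+\lambda^{\theta/\bar{p}}\bigr).
\]
A direct computation using $\bar{p}=2$ and $\theta = p(1-\alpha)+2\alpha$ at $m=1$ gives $\theta/\bar{p} = p/2+\alpha(2-p)/2 > \alpha$ whenever $\alpha < 1$ and $p<2$; by continuity the strict inequality persists for $m$ in a small right-neighborhood of $1$ (possibly shrinking $\varepsilon_0$ further, in an $\alpha$-dependent way). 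Assuming $|u(0,0)|\le \lambda^{\alpha}/4$, the triangle inequality yields
\[
\|u\|_{L^{\infty}(Q^-_\lambda)} \le 2\delta + L(\lambda+\lambda^{\theta/\bar p}) + \tfrac{\lambda^{\alpha}}{4} \le \lambda^{\alpha},
\]
upon first choosing $\lambda$ so small that $L(\lambda^{1-\alpha}+\lambda^{\theta/\bar p-\alpha})\le 1/2$ and then fixing $\delta \le \lambda^\alpha/8$.

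The iteration in $\lambda$-adic intrinsic cylinders is now a verbatim repetition of Propositions \ref{induction} and \ref{rho}. The rescaling $v_k(x,t) = u(\lambda^k x,\lambda^{k\theta}t)/\lambda^{k\alpha}$ preserves both the exponent $m$ and the equation structure, so the smallness $m-1<\varepsilon_0$ survives every step; the source $f_k$ stays bounded by $\epsilon$ in $L^{q,r}$ exactly as computed in Proposition \ref{induction}, thanks to assumption $A$\ref{assump_w-cc}. Translating the discrete oscillation decay into continuous radii via the three-case analysis of Theorem \ref{t1.2} produces the announced H\"older estimate with universal $M_0$. Since $\alpha\in(0,1)$ is arbitrary and $\alpha/\theta \to 1/2$ as $\alpha\uparrow 1$, the quantitative $C^{1^{-},(1/2)^{-}}$ statement follows.

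The main obstacle is the coupling between $\alpha$ and $\varepsilon_0$: the strict inequality $\theta/\bar{p}>\alpha$, which is what drives the geometric decay, degenerates to equality as $\alpha\uparrow 1$ (even at $m=1$), and this forces $\varepsilon_0$ to depend on $\alpha$. This explains why the theorem is formulated qualitatively — one must first fix $\alpha<1$ and only afterwards extract $\varepsilon_0$. Apart from this bookkeeping, the argument is a careful but routine transcription of the proof of Theorem \ref{t1.2}, with the $p$-caloric tangential estimate replacing the $(m,p)$-H\"older tangential estimate of Theorem \ref{ThmHolderEst}.
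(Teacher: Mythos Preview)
Your proposal is correct and follows exactly the approach the paper sketches: replace the $(m,p)$-Approximation Lemma by the $p$-Approximation Lemma \ref{CaloricLemma}, feed in the $C^{1,\frac12}_{\loc}$ regularity of homogeneous $p$-caloric profiles, and rerun the geometric iteration of Propositions \ref{1stStepInduc}--\ref{rho} and the case analysis of Theorem \ref{t1.2}. One cosmetic slip: in your oscillation bound for the $p$-caloric function $w$ the time exponent should be $\tfrac{\theta}{2}$ (from the $C^{1,\frac12}$ estimate), not $\tfrac{\theta}{\bar p}$, since $\bar p$ pertains to $(m,p)$-solutions rather than to $w$; this does not affect the argument because $\tfrac{\theta}{2}>\alpha$ holds for all $m$ near $1$ just as you verified at $m=1$.
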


\begin{proof} Combining the $p$-Approximation Lemma \ref{CaloricLemma} with $C_{\text{loc}}^{1, {\frac{1}{2}}}$-regularity estimates available for the homogeneous profiles $\frac{\partial \mathfrak{h}}{\partial t}- \Delta_p \mathfrak{h}  =  0$ (cf. \cite{AMS04} and \cite{D93}), we can proceed similarly as in the proof of Theorem \ref{t1.2}.
\end{proof}

\subsection*{Acknowledgments}

\hspace{0.65cm} P. Andrade was partially supported by the Portuguese government through FCT-Fundação para a Ciência e a Tecnologia, I.P., under the project UID/MAT/04459/2020. J.V. da Silva was partially supported by Conselho Nacional de Desenvolvimento Cient\'{i}fico e Tecnol\'{o}gico (CNPq-Brazil) under Grants No. 307131/2022-0 and FAPDF Demanda Espont\^{a}nea 2021 and   FAPDF - Edital 09/2022 - DEMANDA ESPONTÂNEA. M. Santos was partially supported by the Portuguese government through FCT-Fundação para a Ciência e a Tecnologia, I.P., under the projects UID/MAT/04459/2020 and PTDC/MAT-PUR/1788/2020.



\begin{thebibliography}{99}
\parskip=0.7pt

{\scriptsize{

\bibitem{AMS04} Acerbi, E., Mingione, G. and  Seregin, G.A.,
\textit{Regularity results for parabolic systems relates to a class of non-Newtonian fluids}.
Ann. Inst. H. Poincar\'{e} Anal. Non Lin\'{e}aire 21 (2004), no. 1, 25-60.

\bibitem{AdaSRT} Amaral, M., da Silva, J.V., Ricarte, G.C. and Teymurazyan, R.,
\textit{Sharp regularity estimates for quasilinear evolution equations}.
Israel J. Math. 231 (2019), no. 1, 25-45.

\bibitem{AMU20}  Ara\'{u}jo, D.J., Maia, A.F. and Urbano, J.M.,
\textit{Sharp regularity for the inhomogeneous porous medium equation}.
J. Anal. Math. 140 (2020), n. 2, 395-407.

\bibitem{Ara20} Ara\'{u}jo, J.G.,
\textit{Sharp regularity for the degenerate doubly nonlinear parabolic equation}.
J. Differential Equations 269 (2020), no. 12, 10558-10570.

\bibitem{BP}  Benedek, A. and  Panzone, R.,
\textit{The space $L^p$, with mixed norm}.
Duke Math. J. 28 1961 301-324.

\bibitem{BJdaSR22} Bezerra J\'{u}nior, E.C., da Silva, J.V. and Ricarte, G.C.,
\textit{Geometric estimates for doubly nonlinear parabolic PDEs}.
 Nonlinearity 35 (2022), no. 5, 2334-2362.

\bibitem{BS99} Bonafede, S. and Skrypnik, I.I.,
\textit{On H\"{o}lder continuity of solutions of doubly nonlinear parabolic equations with weight}.
Ukra\"{i}n. Mat. Zh. 51 (1999), no. 7, 890-903; reprinted in Ukrainian Math. J. 51 (1999), no. 7, 996-1012. (2000)

\bibitem{Ciani-Vespri00} Ciani, S. and Vespri, V.,
\textit{A new short proof of regularity for local weak solutions for a certain class of singular parabolic equations}.
Rend. Mat. Appl. (7) 41 (2020), no. 3-4, 251-264.

\bibitem{JVSilva19} da Silva, J.V.,
\textit{Geometric $C^{1+\alpha}$ regularity estimates for nonlinear evolution models}.
 Nonlinear Anal. 184 (2019), 95-115.

\bibitem{DOS18} da Silva, J.V. , Ochoa, P. and Silva, A.,
\textit{Regularity for degenerate evolution equations with strong absorption}.
J. Differential Equations 264 (2018), no. 12, 7270-7293.

\bibitem{daST17} da Silva, J.V. and Teixeira, E.V.,
\textit{Sharp regularity estimates for second order fully nonlinear parabolic equations}.
Math. Ann. 369 (2017), no. 3-4, 1623-1648.

\bibitem{DT94}  D\'{i}az, J.I. and de Th\'{e}lin, F.,
\textit{On a nonlinear parabolic problem arising in some models related to turbulent flows}.
SIAM J. Math. Anal. 25 (1994), no. 4, 1085-1111.

\bibitem{D93}  DiBenedetto, E.,
\textit{Degenerate parabolic equations}. \textit{Universitext}.
Springer-Verlag, New York, 1993. xvi+387 pp. ISBN: 0-387-94020-0.

\bibitem{Diehl21}  Diehl, N.M.L.,
\textit{Improved regularity for the inhomogeneous porous medium equation}.
J. Math. Anal. Appl. 494 (2021), no. 1, 124593, 8 pp.

\bibitem{DiUrb20}  Diehl, N.M.L. and Urbano, J.M.,
\textit{Sharp H\"{o}lder regularity for the inhomogeneous Trudinger's equation}.
Nonlinearity 33 (2020), no. 12, 7054-7066.

\bibitem{DPZZ20} Dong, H., Peng, F., Zhang, Y. and Zhou, Y.,
\textit{Hessian estimates for equations involving p-Laplacian via a fundamental inequality}.
Advances in Mathematics 370 (2020).

\bibitem{FPM22} Feng, Y., Parviainen, M. and Sarsa, S.,
\textit{On the second-order regularity of solutions to the parabolic p-Laplace equation}.
J. Evol. Equ. 22 (2022), no. 6, 1-17.

\bibitem{FS08} Fornaro, S. and Sosio, M.,
\textit{Intrinsic Harnack estimates for some doubly nonlinear degenerate parabolic equations}.
Adv. Differential Equations 13 (2008), no. 1-2, 139-168.

\bibitem{FSV14} Fornaro, S., Sosio, M. and Vespri, V.,
\textit{$L^r_{\text{loc}}$-$L^{\infty}_{\text{loc}}$ estimates and expansion of positivity for a class of doubly non linear singular parabolic equations}.
Discrete Contin. Dyn. Syst. Ser. S 7 (2014), no. 4, 737-760.

\bibitem{HL13} Henriques, E. and Laleoglu, R.,
\textit{Local H\"{o}lder continuity for some doubly nonlinear parabolic equations in measure spaces}.
Nonlinear Anal. 79 (2013), 156-175.

\bibitem{Iva89} Ivanov, A.V.,
\textit{Uniform H\"{o}lder estimates for weak solutions of quasilinear doubly degenerate parabolic equations}.
Akad. Nauk SSSR, Mat. Inst. Leningrad. Otdel., Leningrad, 1989. 22 pp.

\bibitem{Iva91} Ivanov, A.V.,
\textit{Uniform H\"{o}lder estimates for generalized solutions of quasilinear parabolic equations that admit double degeneration}.
Algebra i Analiz 3 (1991), no. 2, 139-179; translation in St. Petersburg Math. J. 3 (1992), no. 2, 363-403

\bibitem{Iva94} Ivanov, A.V.,
\textit{H\"{o}lder estimates for equations of fast diffusion type}.
Algebra i Analiz 6 (1994), no. 4, 101–142; translation in
St. Petersburg Math. J. 6 (1995), no. 4, 791-825

\bibitem{Iva95} Ivanov, A.V.,
\textit{The classes $\mathcal{B}_{m,1}$ and H\"{o}lder continuity for doubly degenerate parabolic equations}.
J Math Sci 75, 2011-2027 (1995).

\bibitem{Iva97} Ivanov, A.V.,
\textit{Regularity for doubly nonlinear parabolic equations}.
Zap. Nauchn. Sem. S.-Peterburg. Otdel. Mat. Inst. Steklov. (POMI) 209 (1994), Voprosy Kvant. Teor. Polya i Statist. Fiz. 12, 37-59, 261; reprinted in J. Math. Sci. 83 (1997), no. 1, 22-37.

\bibitem{Iva94-97} Ivanov, A.V.,
\textit{Maximum modulus estimates for generalized solutions to doubly nonlinear parabolic equations}. (Russian. English, Russian summary) Zap. Nauchn. Sem. S.-Peterburg. Otdel. Mat. Inst. Steklov. (POMI) 221 (1995), Kraev. Zadachi Mat. Fiz. i Smezh. Voprosy Teor. Funktsiĭ. 26, 83–113, 257; translation in J. Math. Sci. (New York) 87 (1997), no. 2, 3322-3342

\bibitem{Iva1994} Ivanov, A.V.,
     \textit{H\"{o}lder estimates for equations of slow and normal diffusion type}.
Zap. Nauchn. Sem. S.-Peterburg. Otdel. Mat. Inst. Steklov. (POMI) 215 (1994), Differentsial'naya Geom. Gruppy Li i Mekh. 14, 130--136, 311; translation in J. Math. Sci. (New York) 85 (1997),  no. 1, 1640–1644.

\bibitem{Iva98} Ivanov, A.V.,
\textit{H\"{o}lder estimates for a natural class of equations of fast diffusion type}.
Zap. Nauchn. Sem. S.-Peterburg. Otdel. Mat. Inst. Steklov. (POMI) 229 (1995), Chisl. Metody i Voprosy Organ. Vychisl. 11, 29-62, 322; translation in J. Math. Sci. (New York) 89 (1998), no. 6, 1607-1630.

\bibitem{Iva00}  Ivanov, A.V.,
\textit{The regularity theory for (m,l)-Laplacian parabolic equation}.
Zap. Nauchn. Sem. S.-Peterburg. Otdel. Mat. Inst. Steklov. (POMI) 243 (1997), Kraev. Zadachi Mat. Fiz. i Smezh. Vopr. Teor. Funktsi\c{i}. 28, 87-110, 339; reprinted in J. Math. Sci. (New York) 99 (2000), no. 1, 854-869.

\bibitem{IvaJag00}  Ivanov, A.V. and J\"{a}ger, W.,
\textit{Existence and uniqueness of a regular solution of Cauchy-Dirichlet problem for an equation of turbulent filtration}.
Zap. Nauchn. Sem. S.-Peterburg. Otdel. Mat. Inst. Steklov. (POMI) 249 (1997), Kraev. Zadachi Mat. Fiz. i Smezh. Vopr. Teor. Funkts. 29, 153-198, 316; reprinted in J. Math. Sci. (New York) 101 (2000), no. 5, 3472-3502

\bibitem{Kalash87}  Kalashnikov, A.S.,
\textit{Some problems of the qualitative theory of second-order nonlinear degenerate parabolic equations}.
(Russian) Uspekhi Mat. Nauk 42 (1987), no. 2(254), 135-176, 287.

\bibitem{K08} Krylov, N.,
\textit{Lectures on elliptic and parabolic equations in Sobolev spaces}. Graduate Studies in Mathematics, 96. American Mathematical Society, Providence, RI, 2008. xviii+357 pp. ISBN: 978-0-8218-4684-1.

\bibitem{KSU12} Kuusi, T., Siljander, J. and Urbano, J.M.,
\textit{Local H\"{o}lder continuity for doubly nonlinear parabolic equations}.
Indiana Univ. Math. J. 61 (2012), no. 1, 399-430.

\bibitem{Leibenson45} Leibenson, L.S.,
\textit{General problem of the movement of a compressible fluid in a porous medium}.
Izv. Akad. Nauk SSSR, Geography and Geophysics 9 (1945), 7-10.

\bibitem{Lind08} Lindqvist, P.,
\textit{On the time derivative in a quasilinear equation}.
Skr. K. Nor. Vidensk. Selsk. (2008) no. 2, 1-7.

\bibitem{Leary00} O'Leary, M.,
\textit{Integrability and boundedness of local solutions to doubly degenerate quasilinear parabolic equations}.
Adv. Differential Equations 5 (2000), no. 10-12, 1465-1492.

\bibitem{PV93} Porzio, M.M. and Vespri, V.,
\textit{H\"{o}lder estimates for local solutions of some doubly nonlinear degenerate parabolic equations}.
J. Differential Equations 103 (1993), no. 1, 146-178.

\bibitem{Stur17} Sturm, S.,
\textit{Existence of weak solutions of doubly nonlinear parabolic equations}.
J. Math. Anal. Appl. 455 (2017), no. 1, 842-863.

\bibitem{Stur17-2} Sturm, S.,
\textit{Existence of very weak solutions of doubly nonlinear parabolic equations with measure data}.
Ann. Acad. Sci. Fenn. Math. 42 (2017), no. 2, 931-962.

\bibitem{Stur18} Sturm, S.,
\textit{Pointwise estimates via parabolic potentials for a class of doubly nonlinear parabolic equations with measure data}.
Manuscripta Math. 157 (2018), no. 3-4, 295-322.

\bibitem{Surn14} Surnach\"{e}v, M.D.,
\textit{Regularity of solutions of parabolic equations with a double nonlinearity and a weight}.
Trans. Moscow Math. Soc. 2014, 259-280.

\bibitem{TU14}  Teixeira, E.V. and Urbano, J.M.,
\textit{A geometric tangential approach to sharp regularity for degenerate evolution equations}.
Anal. PDE 7 (2014), no. 3, 733-744.

\bibitem{TU21}  Teixeira, E.V. and  Urbano, J.M.,
\textit{Geometric tangential analysis and sharp regularity for degenerate PDEs}. Harnack inequalities and nonlinear operators, 175-192, Springer INdAM Ser., 46, Springer, Cham, [2021], $\copyright2021$.

\bibitem{U08}  Urbano, J.M.,
\textit{The method of intrinsic scaling}. A systematic approach to regularity for degenerate and singular PDEs. Lecture Notes in Mathematics, 1930. Springer-Verlag, Berlin, 2008. x+150 pp. ISBN: 978-3-540-75931-7.

\bibitem{Vesp92} Vespri, V.,
\textit{On the local behavior of solutions of a certain class of doubly nonlinear parabolic equations}.
Manuscripta Math. 75 (1992), no. 1, 65-80.

\bibitem{Vesp22} Vespri, V. and Vestberg, M.,
\textit{ An extensive study of the regularity of solutions to doubly singular equations}.
Adv. Calc. Var. 15 (2022), no. 3, 435-473.

}}

\end{thebibliography}
\end{document}